\def\@settitle{\begin{center}%
		\baselineskip14\p@\relax
		\normalfont\LARGE\scshape\bfseries
		\@title
	\end{center}%
}
\def\subsection{\@startsection{subsection}{2}%
	\z@{.5\linespacing\@plus.7\linespacing}{.5\linespacing}%
	{\normalfont\large\bfseries}}
\def\subsubsection{\@startsection{subsubsection}{3}%
	\z@{.5\linespacing\@plus.7\linespacing}{.5\linespacing}%
	{\normalfont\itshape}}
\date{\today}
\authors}
\newtheorem{Thm}{Theorem}[section]
\newtheorem{Prop}[Thm]{Proposition}
\newtheorem{Lem}[Thm]{Lemma}
\newtheorem{Cor}[Thm]{Corollary}
\newtheorem{As}[Thm]{Assumption}
\newtheorem{Prob}[Thm]{Problem}
\newtheorem{Rem}[Thm]{Remark}
\theoremstyle{remark}
\newtheorem{Ex}{Example}
\newcommand{\R}{\mathbb{R}}
\newcommand{\Rp}{\R_{>0}}
\newcommand{\N}{\mathbb{N}}
\newcommand{\Let}{\coloneqq}
\newcommand{\xdes}{x^{\rm d}}
\newcommand{\ydes}{y^{\rm d}}
\newcommand{\wdes}{w^{\rm d}}
\newcommand{\udes}{u^{\rm d}}
\newcommand{\eps}{\varepsilon}
\newcommand{\epsd}{\eps_{\mathrm{d}}}
\newcommand{\epsc}{\eps_{\mathrm{c}}}
\newcommand{\Mbdd}{A_b}
\newcommand{\Nbdd}{B_b}
\newcommand{\Sym}[1]{\left[{#1}\right]^	\dagger}
\newcommand{\BlkDiag}[1]{\mathfrak{Diag}\left\{{#1}\right\}}
\newcommand{\hb}{\bar{h}}
\newcommand{\QV}{\mathcal{Q}}
\newcommand{\Qtau}{\widetilde{\mathcal{Q}}}
\newcommand{\qt}{q}
\newcommand{\rhob}{k_b}
\newcommand{\Ac}{A_c}
\newcommand{\Bc}{B_c}
\newcommand{\Cc}{C_c}
\newcommand{\Dc}{D_c}
\newcommand{\xc}{w}
\newcommand{\xcl}{z}
\newcommand{\nx}{{n_x}}
\newcommand{\enu}{{n_u}}
\newcommand{\ny}{{n_y}}
\newcommand{\xstar}{x^{\star}}
\def\be{\begin{equation}}
\def\ee{\end{equation}}
\title[Event-triggered Robust Output Regulation]{Robust Output Regulation: Optimization-Based Synthesis and Event-Triggered Implementation}
\author[M. S. Sarafraz]{Mohammad Saeed Sarafraz}
\author[A. V. Proskurnikov]{Anton V. Proskurnikov}
\author[M. S. Tavazoei]{Mohammad Saleh Tavazoei}
\author[P. {Mohajerin Esfahani}]{Peyman {Mohajerin Esfahani}}
\thanks{The authors are with the Electrical Engineering Department, Sharif University of Technology, Iran ({\tt \{Sarafraz,Tavazoei\}@ee.sharif.ir}), Department of Electronics and Telecommunications, Politecnico di Torino, Italy and Institute for Problems in Mechanical Engineering of the Russian Academy of Sciences, St. Petersburg, Russia ({\tt anton.p.1982@ieee.org}), and the Delft Center for Systems and Control, TU Delft, The Netherlands ({\tt P.MohajerinEsfahani@tudelft.nl}).}
\begin{document}
\maketitle

\begin{abstract}
    We investigate the problem of practical output regulation, {i.e., to} design a controller that brings the system output in the vicinity of a desired target value while keeping the other variables bounded. We consider uncertain systems that are possibly nonlinear and the uncertainty of {their} linear part{s} is modeled element-wise through a parametric family of matrix boxes. {An optimization-based design procedure is proposed that delivers a continuous-time control and estimates the maximal regulation error. We also analyze an event-triggered emulation of this controller, which can be implemented on a digital platform, along with an explicit estimates of the regulation error.}
\end{abstract}
	
\section{Introduction} \label{sec:introduction}
	{
{O}{utput} regulation of uncertain dynamic systems is a fundamental problem in the control literature that finds a wide range of real-world applications~\cite{naldi2016robust}. The problem has been studied in various settings depending on the system dynamics (e.g., linear~\cite{kanev2004robust} or nonlinear \textcolor{black}{models}~\cite{liu2017event1}) and uncertainty nature (e.g., characterization in time~\cite{liu2017event} or frequency domains~\cite{doyle1982analysis}). In the light of recent developments of digitalization, communication and computation limitations of the controllers\textcolor{black}{'} architecture have also become an important consideration, which also contributes to this variety of the setting. In particular, one of the distinct features of the controllers is the time scale under which the controller receives output measurements or updates the control \textcolor{black}{efforts applied} to the systems~(e.g., continuous~\cite{kanev2004robust}, periodic~\cite{franklin1998digital}, or event-based interactions~\cite{xing2019event}). 

From a literature point of view, the uncertainty aspect is often the focus of robust control while the time-scale implementation of the controllers is the main theme of the event-triggered mechanism. The control synthesis tools of output regulation were first developed in the robust control literature for the setting in which the uncertainty is characterized in the frequency domain~\cite{doyle1982analysis,lanzon2008stability}. The setting of time-domain uncertainty, however, remains much less explored partly, due to the inherent provable computational difficulty~\cite{nemirovskii1993several}. \textcolor{black}{Considering} the current existing work\textcolor{black}{s} briefly mentioned above, we set the following as our main objective in this study:
	\begin{flushleft}    
	\justifying{\em {Given a nonlinear plant with element-wise time-domain uncertainty, we aim} to develop a scalable computational framework, along with rigorous and explicit performance guarantees, to synthesize a robust output {regulator and an} event-triggered mechanism {enabling its implementation on a digital platform}. 
	}
	\end{flushleft} 

\paragraph*{\bf Related literature on robust control}
A natural way \textcolor{black}{for modeling of the} uncertainty in the time domain is through the state-space representation of the dynamic \textcolor{black}{systems}. The stability of such systems can be cast as an optimization program, which unfortunately is often computationally intractable~\cite{gahinet1996affine}. Conservative approximations in the form of linear matrix inequalities (LMIs) are proposed for particular subclasses of uncertainty including single ellipsoid~\cite{tarbouriech2018insights} or polytopic systems with low number of vertices~\cite{kanev2004robust,karimi2007robust,agulhari2011lmi}. A richer modeling framework is element-wise or box uncertainty that allows to conveniently incorporate different sources of uncertainties. One approach to deal with this class of uncertainty is randomized algorithms~\cite{tempo2012randomized}. Alternatively, one can leverage the recent developments in the robust optimization literature~\cite{ben2003extended} to address the computational bottleneck. The optimization-based framework proposed in this paper exploits {the latter} result in the context of output regulation.

\paragraph*{\bf Related literature on {event-triggered control of} uncertain systems} The second part of this study is concerned with event-triggered control, \textcolor{black}{as} a powerful technique to address \textcolor{black}{the} potential communication limitation on the measurement or actuation side. A recent approach towards event-triggered control of uncertain systems builds on an adaptive control perspective~\cite{xing2019event,wang2019event}. 
The structure of an event triggering mechanism dictated by the necessity to maintain a positive dwell time between consecutive events usually makes it impossible to ensure asymptotic convergence. As such, the \emph{practical stability} (i.e., convergence to a ``tunable'' invariant set) is aimed for. Such a notion is also adopted in other contexts like quantized control~\cite{demirel2017optimal}, and has been investigated in the presence of a common Lyapunov function~\cite{liu2017event,liu2017event1}. 
		
Focusing on uncertain linear systems, the {work}~\cite{tarbouriech2018insights} considers norm-bounded uncertainties with continuous measurements, while \cite{liu2017robust,qian2019event} develop mechanism under the assumption that the system is minimum-phase. Most recently, the work~\cite{liang2020robust} studies the problem of output regulation together along with an event-triggering mechanism in which the robustness \textcolor{black}{is} guaranteed for an unstructured open uncertainty set. Concerning nonlinear systems, the recent work~\cite{khan2019new} proposes an event-triggered mechanism under the assumption that the system is input-to-state stable. Unlike the existing literature mention\textcolor{black}{ed} above, in this article we opt to introduce an event-triggering mechanism in which both monitoring the output measurement \textcolor{black}{and} implementing the actuation values operate on a discrete-time basis. To our best knowledge, none of the existing works considers this setting \textcolor{black}{in control of} uncertain nonlinear systems. The closest work in this spirit is~\cite{yang2018periodic}, \textcolor{black}{in which the class of} single-input single-output system \textcolor{black}{is considered} and the performance {is guaranteed only for sufficiently large feedback gains and sufficiently small periodic sampled-times}. 

\paragraph*{\bf Our contributions}
The particular emphasis of this study is on the computational aspect of the control design and the corresponding event-triggering mechanism, along with explicit performance guarantees. More specifically, the contributions of the article are summarized as follows:
 
\begin{enumerate}[label=$(\roman*)$]
	\item{\bf Dynamic structure and inherent hardness:} We propose a class of dynamic output controllers aiming to locate the closed-loop equilibrium in accordance with the desired regulation task (Section~\ref{subsec:equilibrium} and Lemma~\ref{lem.equilibrium}). We further show that from a computational viewpoint {stability analysis of the proposed controller} is strongly NP-hard~(Proposition~\ref{prop.interactability}).
	
    \item {\bf Robust control under element-wise (box) uncertainties:} We provide a sufficient condition along with an optimization framework to synthesize a dynamic output controller that enjoys a provable practical stability~(Theorem \ref{thm.Continuous}). As a byproduct, we also show that given any fixed controller, the proposed optimization program reduces to a tractable convex optimization that can be viewed as a computational certification tool for the practical stability~(Corollary~\ref{cor.nominaldesign}).

    \item {\bf Sampled-time event-triggered mechanism:}
	We propose a unifying triggering mechanism together with easy-to-compute sufficient conditions under which the proposed output controller can be implemented through aperiodic measurements and event-based actuation~(Theorem~\ref{thm.aperiodic}). The proposed mechanism offers explicit computable maximal inter-sampling and regulation error bounds. The proposed result subsumes both the existing approaches~\cite{tabuada2007event,heemels2013periodic} as a special case~({Corollary~\ref{cor.special} and }Remark~\ref{rem:comparison}).
	
	{\item {\bf Numerical algorithm:} Leveraging recent results from~\cite{lee2018sequential}, we propose a numerical algorithm to deal with nonlinearities of the proposed optimization program concerning the control synthesis of the output regulation task (Algorithm~\ref{alg:sequ}).}
\end{enumerate}

In the rest of the article, we present a formal description of the problem along with some basic assumptions in Section~\ref{sec:Problem Statement}. The robust control method is developed in Section~\ref{sec:Continuous Robust Control}, and the sampled-time event-triggered mechanism is presented in Section~\ref{sec:Triggered}. Section~\ref{Sec:Numerical} {discusses an algorithm to tackle the proposed optimization program, and further provides} {numerical example in order to validate the theoretical results.} 	
	
\paragraph*{\bf Notation}
The set of $n \times n$ symmetric matrices and the set of $n \times n$ positive-definite (semi-definite) symmetric ones are denoted by $\mathbb{S}^{n}$ and $\mathbb{S}^n_{\succ 0}$ ($\mathbb{S}^n_{\succeq 0}$), respectively. For two symmetric matrices $A$ \textcolor{black}{and} $B$, we write $A\succ B$ (respectively, $A \succeq B$) if $A-B\in\mathbb{S}^n_{\succ 0}$ (respectively, $\mathbb{S}^n_{\succeq 0}$). For a square matrix $A$, we denote $\Sym{A}=A+A^{\top}$. The symbol $\BlkDiag{A_1,A_2,...,A_n}$ denotes the block diagonal matrix with blocks $A_1,A_2,...,A_n$. For briefness in notations, the matrix $\begin{bsmallmatrix} A & B^{\top} \\ B & C \end{bsmallmatrix}$ is shown by $\begin{bsmallmatrix} A & * \\ B & C \end{bsmallmatrix}$.  We use $e_1,\ldots,e_m$ to denote the standard coordinate basis of $\R^m$. Also, $\mathds{1}_m\in\R^m$ denotes the vector whose elements are all equal to $1$.
\section{Problem Statement}\label{sec:Problem Statement}
   Consider the control system
	\be\label{eq.sys-main}
	\left\{\begin{array}{l}
		\dot x(t)=A^{\star}x(t) + B^{\star}u(t) + k^\star\big(x(t)\big) \vspace{1mm}\\
		y(t)=Cx(t)
	\end{array}\right.
	\ee
	where {the vector} $x(t)\in\R^{\nx}$, $u(t)\in\R^{\enu}$, and $y(t)\in\R^{\ny}$ are the state, the control, and the output \textcolor{black}{vectors}, respectively. The matrices $A^{\star}$ and $B^{\star}$ represent the linear part of the state dynamics, and the function $k^\star:\R^\nx\rightarrow \R^\nx$ encapsulates the nonlinearity of the dynamics. Throughout this article, we assume that system~\eqref{eq.sys-main} admits a unique solution~$x(\cdot)$ {for any $x(0)$}. 
	{The controller to be designed in the next section has access only to the output $y(t)$.} {We allow} the matrices $A^{\star}, B^{\star}$ and the nonlinearity~$k^\star:\R^\nx \rightarrow \R^\nx$ in the system~\eqref{eq.sys-main} {to be partially uncertain}. Our main control objective is to stabilize~\eqref{eq.sys-main} in the Lagrange sense (i.e., all solutions are bounded) and steer the output trajectory of~\eqref{eq.sys-main} to an $\eps$-neighborhood of a target value $\ydes\in\R^\ny$. Formally speaking, we aim to ensure that 
	\be\label{eq.terminal}
	\sup_{t \ge 0} \| x(t) \| < \infty,\;\; 
	\varlimsup_{t\to\infty}\|y(t)-\ydes\| \le \eps\quad \forall x(0)\in\R^\nx.
	\ee
	The special case of $\eps = 0$ {corresponds to asymptotic output regulation} and {the relaxed condition with} is {known} as ``$\eps$-practical output stability"~\cite{moreau2000practical}.
	
	{Henceforth, the following assumptions are adopted.}
	
	\begin{As}
		{[Uncertainty characterization]}
		\label{as.coeff}
		{System~\eqref{eq.sys-main} and {the desired value} $\ydes \in \R^\ny$ satisfy the following \textcolor{black}{assumptions}}:  
		\begin{enumerate}[label=(\roman*)]
			\item \label{as.coeff.linear}
			(Box uncertainty) Matrices $A^{\star}$ and $B^{\star}$ {obey inequalities}
			\be\label{eq.assumption1}| A^\star - A | \le \Mbdd, \quad | B^\star - B | \le \Nbdd,
			\ee
			where {$A$ \textcolor{black}{and} $B$ are known nominal matrices}, the inequalities are understood element-wise, and $\Mbdd=\begin{bmatrix}a_{b_{ij}}\end{bmatrix}_{ij}, \Nbdd=\begin{bmatrix}b_{b_{ij}}\end{bmatrix}_{ij}$ are the respective uncertainty bounds.
			\item \label{as.coeff.nonlinear}
			{(Bounded nonlinearity)} The function~$k^\star$ satisfies
			\be \label{eq.assumption2}\|k^\star(x_1)-k^\star(x_2)\| \le \rhob, \qquad \forall x_1, x_2 \in \R^\nx
			\ee
			where $\rhob\geq0$ {is a known constant}. 
			\item \label{as.coeff.target}
			(Existence of an equilibrium) There exists a pair $(\xdes, \udes) \in \R^\nx\times\R^\enu$ such that
			\be\label{eq.eq-des}
			\ydes = C \xdes \quad \textrm{and} \quad A^\star \xdes + k^{\star}(\xdes) = - B^\star \udes\,.
			\ee
		\end{enumerate}
	\end{As}
	
	\textcolor{black}{ Assumption~\ref{as.coeff}\ref{as.coeff.nonlinear} holds if and only if the nonlinearity of the dynamics is globally bounded. If $\|k^{\star}(x)\|\leq C$, then~\eqref{eq.assumption2} holds with $\rhob=2C$. However, this estimate of $\rhob$ may be too conservative, e.g., if $k^{\star}$ is an uncertain constant, one can actually choose $\rhob=0$. The ``incremental'' condition~\eqref{eq.assumption2} thus provides more flexbility. There are several classes of nonlinear dynamics for which the bound~\eqref{eq.assumption2} is available: (i) \emph{pendulum-like} nonlinearity which represents periodicity of the dynamics, e.g., phase-locked loops~\cite{GLY,Smirnova:19}, or swing equations in power systems~\cite{esfahani2015tractable}; (ii)  nonlinearity presented due to an underlying \emph{neural network} architecture~\cite{fazlyab2020safety} or a lookup-table~\cite{Gao2010}. Such nonlinearities may or may not be fully known, but regardless of this knowledge, it is often too complicated to be utilized in control synthesis algorithms. Furthermore, we emphasize that the bound~$\rhob$ will not be required for control design and is only used in the final performance bounds.} 
	
	Assumption~\ref{as.coeff}\ref{as.coeff.target} involves $(\ny+\nx)$ algebraic constraints with $(\nx + \enu)$ variables. Therefore, we typically expect that such equations have a solution~$(\xdes,\udes)$ when $\enu \ge \ny$, i.e., the number of control variables is not less than the number of output\textcolor{black}{s}. When the dynamic \textcolor{black}{system}~\eqref{eq.sys-main} is linear (i.e., $k^*$ is constant), these equations reduce to a set of linear constraints, and that a sufficient condition for Assumption~\ref{as.coeff}\ref{as.coeff.target} is the matrix~$\begin{bsmallmatrix} C & 0 \\ A^\star & B^\star \end{bsmallmatrix}$ of full column rank.
	
	\begin{Prob}\label{prob.def}
		Consider the system \eqref{eq.sys-main} under Assumption~\ref{as.coeff}, and let $\ydes \in \R^\ny$ and $\eps \ge 0$ be a desired target and regulation precision, respectively. 
		\begin{enumerate}[label=(\roman*)]
			\item \label{prob.cont}
			{\bf Control synthesis:} Synthesize an output control {$y_{[0,t]} \mapsto u(t)$},\footnote{The notation $y_{[0,t]}$ is the restriction of the function~$y$ to the set $[0,t]$, that is, $\{y(s): s\in[0,t]\}$.}
			$t \ge 0$, in order to ensure the $\eps$-practical output regulation in the sense of \eqref{eq.terminal}. 
			
			\item \label{prob.dis}
			{\bf {Sampled-time event-based} emulation:} Given a prescribed series of measurement sampled-times, design a triggering mechanism to update the control along with a guaranteed precision of the desired output regulation~\eqref{eq.terminal}. 
			
		\end{enumerate}
	\end{Prob}
	
	We start with {designing} a {continuous-time controller (Section~\ref{sec:Continuous Robust Control}) whose sampled-time redesign, or emulation, is considered in Section~\ref{sec:Triggered}}. Note that the viability of the sampled-time emulation 
	reflects a certain robustness level of the continuous-time controller.

\section{Continuous-Time Control Design} 
\label{sec:Continuous Robust Control}
{The main focus of this section is Problem~\ref{prob.def}\ref{prob.cont}. We first find a structure of the controller ensuring that the closed-loop system has an equilibrium $(\xdes,\udes)$ such that $\ydes=C\xdes$, and then provide sufficient conditions \textcolor{black}{guaranteeing} that this equilibrium is globally asymptotically stable.} The existence of an equilibrium is natural, if one {is} interested in the $\eps$-practical stability~\eqref{eq.terminal} with an arbitrarily small~$\eps$.
	
A possible control architecture, and perhaps the simplest form, is {the static} controller $u(t) = \Dc y(t)+\eta$. {Unfortunately, to provide the existence of an equilibrium from Assumption~\ref{as.coeff}\ref{as.coeff.target}, the parameter $\eta=\udes-\Dc\ydes$ should depend on $\udes$, which, in turn, depends on the uncertain matrices $A^{\star}$ {and} $B^{\star}$ and function $k^\star$. For this reason, we propose a dynamic controller, being a multidimensional counterpart of the classical proportional-integral control}.

\subsection{Dynamic control and equilibrium existence}\label{subsec:equilibrium}
	
Consider {now a more general} \emph{dynamic} controller
	\begin{align}\label{eq.controller}
	\begin{dcases}
	\dot{\xc}(t)=\Ac \xc(t)+\Bc y(t)+\xi\\
	u(t)=\Cc \xc(t)+\Dc y(t)+\eta,
	\end{dcases}
	\end{align}
where matrices $\Ac,\Cc \in \R^{\enu \times \enu}$, $\Bc,\Dc \in \R^{\enu \times \ny}$ and $\xi, \eta \in \R^{\enu}$ are {the} design parameters. {These additional parameters in~\eqref{eq.controller} enable one to {make} the equilibrium $(x^*,w^*)$ of the closed-loop system~\eqref{eq.sys-main} and \eqref{eq.controller} {compatible} with the target value $\ydes$ {in the face of the parametric uncertainty~\eqref{eq.assumption1}.}}
	
\begin{Lem}[Closed-loop equilibrium]\label{lem.equilibrium}
	{If} Assumption~\ref{as.coeff}\ref{as.coeff.target} holds, the matrix $\Cc$ {has} full column rank, and {the controller parameters are such that}
	\be\label{eq.coeff}
	\Ac = 0 \quad \textrm{and} \quad \xi=-\Bc\ydes, \,\ee
	{then} the {closed-loop system~\eqref{eq.sys-main} and \eqref{eq.controller} has an}
	equilibrium~$(\xdes,\wdes)$, where $\xdes$ is introduced in Assumption~\ref{as.coeff}\ref{as.coeff.target}.
\end{Lem}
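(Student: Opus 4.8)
The plan is to verify directly that the point $(\xc,x)=(\wdes,\xdes)$ is an equilibrium of the coupled system~\eqref{eq.sys-main}–\eqref{eq.controller}, where $\wdes$ is the (yet to be identified) controller state. The natural strategy is: first use the controller output equation together with Assumption~\ref{as.coeff}\ref{as.coeff.target} to pin down what $\wdes$ must be, then check that the controller state equation and the plant state equation are simultaneously satisfied at that point under the hypotheses~\eqref{eq.coeff}.

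First I would substitute $y=\ydes=C\xdes$ (which holds by Assumption~\ref{as.coeff}\ref{as.coeff.target}) into the output equation of~\eqref{eq.controller}, requiring the control input to equal the equilibrium value $\udes$ from Assumption~\ref{as.coeff}\ref{as.coeff.target}: this gives
\be\label{eq.plan-w}
\udes=\Cc\wdes+\Dc\ydes+\eta,
\ee
and since $\Cc$ has full column rank, $\Cc^{\top}\Cc$ is invertible, so~\eqref{eq.plan-w} determines
\be\label{eq.plan-wsol}
\wdes=(\Cc^{\top}\Cc)^{-1}\Cc^{\top}\big(\udes-\Dc\ydes-\eta\big).
\ee
With $u=\udes$ and $y=\ydes$ in hand, the plant equation $\dot x = A^\star x + B^\star u + k^\star(x)$ evaluated at $x=\xdes$ reads $A^\star\xdes+B^\star\udes+k^\star(\xdes)$, which vanishes precisely by the second identity in~\eqref{eq.eq-des}; hence $\dot x=0$ at $(\wdes,\xdes)$. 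It remains to check the controller state equation: with $\Ac=0$ and $\xi=-\Bc\ydes$ from~\eqref{eq.coeff}, the right-hand side of $\dot\xc=\Ac\xc+\Bc y+\xi$ at $y=\ydes$ becomes $0\cdot\wdes+\Bc\ydes-\Bc\ydes=0$, so $\dot\xc=0$ as well. Thus $(\wdes,\xdes)$ with $\wdes$ as in~\eqref{eq.plan-wsol} is an equilibrium, and by construction the output there is $C\xdes=\ydes$.

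There is no real obstacle here — the lemma is essentially a bookkeeping statement that the extra integrator-like parameters $(\Ac,\Bc,\Cc,\Dc,\xi,\eta)$ have been arranged so that the unknown $\udes$ (which itself depends on the unknown $A^\star,B^\star,k^\star$) is absorbed into the controller state $\wdes$ rather than into a feedthrough constant. The one point worth stating carefully is why $\Cc$ full column rank is needed: it guarantees~\eqref{eq.plan-w} is solvable for $\wdes$ for the given $\udes$, i.e. that the constructed equilibrium actually exists as a controller state (this is also why $\enu\ge$ the column dimension of $\Cc$, consistent with the earlier remark $\enu\ge\ny$). If I wanted a cleaner writeup I would simply define $\wdes$ by~\eqref{eq.plan-wsol} at the outset and then present the three verifications ($\dot\xc=0$, $\dot x=0$, $y=\ydes$) as a short display.
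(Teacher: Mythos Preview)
Your proof is correct and follows essentially the same route as the paper: define $\wdes$ so that $\Cc\wdes+\Dc\ydes+\eta=\udes$, then verify the two equilibrium equations using~\eqref{eq.eq-des} and~\eqref{eq.coeff}. One minor remark: since $\Cc\in\R^{\enu\times\enu}$ is square, full column rank simply means $\Cc$ is invertible, so the pseudoinverse detour and the parenthetical about ``$\enu\ge$ the column dimension of $\Cc$'' are unnecessary (and the latter is a bit confused); you can just write $\wdes=\Cc^{-1}(\udes-\Dc\ydes-\eta)$.
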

	
\begin{proof}
	Since the matrix~$\Cc$ has full {column} rank, there exists $\wdes\in\R^\enu$ such that $\Cc \wdes+\Dc\ydes+\eta = \udes$, where $\udes$ is \textcolor{black}{given by}~\eqref{eq.eq-des}. {In view of Assumption~\ref{as.coeff}\ref{as.coeff.target} and~\eqref{eq.coeff}}, {the point $(x^d,\xc^d)\in \R^{\nx+\enu}$ obeys the algebraic equations}
	\begin{align}\label{eq.equilibrium}
	\left\lbrace
	\begin{matrix*}[l]
    	A^{\star}\xdes+B^{\star}(\Cc \wdes+\Dc C\xdes+\eta)+k^\star(\xdes)=0, \vspace{1mm}\\
    	\Ac \wdes+\Bc C\xdes+\xi = \Bc(\ydes - C\xdes) = 0,
	\end{matrix*}
	\right.
	\end{align}
	\textcolor{black}{Hence, it is an equilibrium for the closed-loop system.}
\end{proof}
{Notice that}	
the {controller's} parameters~$\Bc, \Dc$, \textcolor{black}{and} $\eta$ {do not influence the \emph{existence} of an equilibrium compatible with the {desired output} $\ydes$.} 
While $\Bc$ and $\Dc$ may influence the stability of the transient behavior of the closed-loop system, the vector~$\eta$ does not {affect stability and only determines~$w^d$}. 
{Hence, without} loss of generality, we set $\eta=-\Dc\ydes$. {Combining this with~\eqref{eq.coeff} and the controller~\eqref{eq.controller} shapes into}
\begin{equation}\label{eq.controller1}
	\left\lbrace
	\begin{matrix*}[l]
	\dot \xc(t)=\Bc\big(y(t)-\ydes\big) \vspace{1mm}\\
	u(t)=\Cc \xc(t)+\Dc\big(y(t)-\ydes\big) \,.
	\end{matrix*}\right.
\end{equation}
Note that the dynamic controller~\eqref{eq.controller1} may be {considered} as a (multidimensional) extension of the conventional PI controller.
	
\subsection{Closed-loop stability of transient behavior}
{The goal of this section is to design the controller parameters~$\Bc, \Cc$, \textcolor{black}{and} $\Dc$ such that the
 equilibrium from Lemma~\ref{lem.equilibrium} is (practically) stable.} To this end, {we} introduce the augmented state vector {of the closed-loop system} \textcolor{black}{as}
\begin{equation}
\label{eq.newvariables}
	\xcl(t)\coloneqq \begin{bmatrix} x(t)-\xdes \\ \xc(t)-\wdes \end{bmatrix}.
\end{equation}
	
Based on the system dynamics in~\eqref{eq.sys-main} together with the controller~\eqref{eq.controller1}, \textcolor{black}{it is obtained that}
\begin{align}
	\label{eq.z_dynamic}
	\begin{array}{r}
	\dot{\xcl}=\left[\bar{A}+J^{\top}\Delta A J+(\bar{B}+J^{\top}\Delta B J)F\bar{C}\right]\xcl+J^\top(k^\star(J^\top \xcl)-k^{\star}(\xstar)),
	\end{array}
\end{align}
where $\Delta A = A^{\star}-A$ and $\Delta B = B^{\star}-B$ represent the uncertainty in the linear part of the system dynamics, and matrices $\bar{A},\bar{B},\bar{C}, F$, \textcolor{black}{and} $J$ are defined as \textcolor{black}{follows}.
\begin{align}
	\label{M_define}
	\begin{matrix*}[l]
	\bar{A} \Let \begin{bmatrix} A & 0 \\ 0& 0\end{bmatrix}, \, \bar{B} \Let \begin{bmatrix} B & 0 \\0&I \end{bmatrix},\,
	\bar{C} \Let \begin{bmatrix} C & 0 \\ 0 & I \end{bmatrix}, \, J\Let\begin{bmatrix} I_{\nx} & 0_{\nx\times \enu}\end{bmatrix}, \, F \Let \begin{bmatrix} \Dc & \Cc \\ \Bc & 0\end{bmatrix}
	\end{matrix*}
\end{align}
{It should be noted that} matrix~$F$ collects all the design variables of the controller.
{The goal of the controller design is to \textcolor{black}{guarantee} the (practical)} stability of the {system}~\eqref{eq.z_dynamic} for all uncertainties $\Delta A, \Delta B$, \textcolor{black}{and} $k^\star(\cdot)$ that meet Assumption~\ref{as.coeff}. Unfortunately, it turns out that the exact characterization of such an $F$ is provably intractable. In fact, \textcolor{black}{the special case of} checking the stability of the system~\eqref{eq.z_dynamic} for a given $F$ is also \textcolor{black}{a} difficult \textcolor{black}{problem}. This is formalized in the next proposition.
	{
\begin{Prop}[Intractability]
	\label{prop.interactability}
	Consider the system~\eqref{eq.sys-main} under Assumption~\ref{as.coeff}, and let the control signal follow the dynamics~\eqref{eq.controller1}. Then, \textcolor{black}{for a} given \textcolor{black}{set of the} control parameters (\textcolor{black}{i.e.,} matrix $F$ in \eqref{M_define}), \textcolor{black}{the problem of} checking whether the output target stability~\eqref{eq.terminal} holds for some $\eps \ge 0$
	is strongly NP hard and equivalent to
	\begin{align}\label{eq.forallthereexist}
    	\begin{matrix*}[l]
    	\forall \Delta A,\Delta B: |\Delta A| \leq \Mbdd, \, |\Delta B|\leq \Nbdd \,
    	\quad\exists P \in S^{\nx+\enu}_{\succ 0}:\\\Sym{P\left(\bar{A}+J^{\top}\Delta A J\right.\left.+(\bar{B}+J^{\top}\Delta B J)F\bar{C}\right)} \preceq 0.
    	\end{matrix*}
	\end{align}
\end{Prop}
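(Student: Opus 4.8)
The plan is to establish the two assertions in Proposition~\ref{prop.interactability} separately: first the equivalence of the output target stability~\eqref{eq.terminal} with the robust Lyapunov condition~\eqref{eq.forallthereexist}, and second the strong NP-hardness via reduction from a known hard problem. For the equivalence, I would argue that the nonlinear term $J^\top(k^\star(J^\top\xcl)-k^\star(\xdes))$ is, by Assumption~\ref{as.coeff}\ref{as.coeff.nonlinear}, a globally bounded perturbation (norm at most $\rhob$), so it cannot affect whether trajectories are ultimately bounded or escape to infinity; hence~\eqref{eq.terminal} for some $\eps\ge 0$ holds for the full nonlinear closed loop~\eqref{eq.z_dynamic} if and only if it holds for the associated linear differential inclusion $\dot\xcl = \bigl(\bar A+J^\top\Delta A J+(\bar B+J^\top\Delta B J)F\bar C\bigr)\xcl$ uniformly over $|\Delta A|\le\Mbdd$, $|\Delta B|\le\Nbdd$. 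The forward direction then amounts to: if every admissible $\Delta A,\Delta B$ yields a Hurwitz closed-loop matrix, each such matrix admits a Lyapunov matrix $P\succ 0$ solving the strict Lyapunov inequality, which is exactly~\eqref{eq.forallthereexist}; conversely, existence of such a $P$ (even one depending on $\Delta A,\Delta B$) certifies that the corresponding matrix is Hurwitz, so all linear solutions decay and the bounded nonlinear perturbation keeps the nonlinear trajectories in a bounded set, giving~\eqref{eq.terminal} with a computable $\eps$ (roughly proportional to $\rhob$ through the condition number of $P$ and $\|C\|$). I would spell out that the quantifier order $\forall(\Delta A,\Delta B)\,\exists P$ — rather than $\exists P\,\forall(\Delta A,\Delta B)$ — is essential and matches the pointwise Hurwitz characterization exactly.

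For the hardness part, the natural route is to reduce from the (strongly NP-hard) problem of deciding robust stability / spectral-radius-type questions for interval matrices, for which hardness is classical~\cite{nemirovskii1993several}; equivalently one may reduce from checking whether a family of matrices with interval entries is Hurwitz. The idea is to choose the nominal data $A,B,C,\Mbdd,\Nbdd$ and a fixed $F$ so that the closed-loop interval family $\{\bar A+J^\top\Delta A J+(\bar B+J^\top\Delta B J)F\bar C : |\Delta A|\le\Mbdd,\ |\Delta B|\le\Nbdd\}$ coincides with (or contains as a subfamily controlling stability) an arbitrary given interval matrix family. Taking $F=0$, $C=0$, $\Nbdd=0$, the family collapses to $\BlkDiag{A+\Delta A,\,0}$ with $|\Delta A|\le\Mbdd$, whose robust stability question is the interval-matrix Hurwitz-stability problem; a block of zeros only contributes a marginal (non-strictly-stable) direction, so I would instead use $F\ne 0$ or a slightly perturbed $\bar A$ to push that block strictly into the left half-plane while leaving the hard block intact — a routine padding argument. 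Since~\eqref{eq.terminal} for some $\eps$ is, by the equivalence just proved, precisely robust Hurwitz stability of this family, the reduction transfers strong NP-hardness.

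The main obstacle I anticipate is the careful statement and proof of the equivalence direction "existence of a $(\Delta A,\Delta B)$-dependent $P$ $\Rightarrow$ \eqref{eq.terminal}": one must produce an honest, uniform ultimate bound for the \emph{nonlinear} system from a family of Lyapunov certificates that need not be uniformly conditioned a priori. The clean fix is to observe that pointwise solvability of the strict Lyapunov inequality over the \emph{compact} set $\{|\Delta A|\le\Mbdd\}\times\{|\Delta B|\le\Nbdd\}$ is equivalent to pointwise Hurwitzness, and Hurwitzness of a compact matrix family is equivalent to a common quadratic bound on $\|e^{Mt}\|$ (uniform exponential stability of the inclusion), which then yields a single ultimate bound for the $\rhob$-perturbed dynamics by the standard variation-of-constants estimate. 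A secondary, purely bookkeeping obstacle is verifying that the boundedness of $x(t)$ and the output error translate correctly through the change of variables~\eqref{eq.newvariables} and the structure of $J,\bar C$; this is immediate since $x-\xdes = J^\top\xcl$ and $y-\ydes = C(x-\xdes)$, so bounding $\|\xcl\|$ bounds both quantities in~\eqref{eq.terminal}.
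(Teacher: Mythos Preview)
Your equivalence argument tracks the paper's proof closely: both strip off the bounded nonlinear term (the paper invokes \cite[Theorem~9.1]{khalil2002nonlinear}) to reduce~\eqref{eq.terminal} to stability of the linear part~\eqref{eq.z_dynamic_without}, and then use the classical Lyapunov characterization of Hurwitz matrices to obtain~\eqref{eq.forallthereexist}. The paper's treatment of hardness is also terser than yours: it simply notes that~\eqref{eq.forallthereexist} is an instance of interval-matrix stability and cites \cite{nemirovskii1993several} and \cite[Corollary~2.6]{ahmadi2019complexity}. Your explicit embedding of an arbitrary interval matrix into the closed-loop family, together with the padding to neutralize the zero block inherited from $\Ac=0$, is the formally correct direction for the reduction and is what the paper is tacitly relying on; it buys a self-contained argument at the cost of some bookkeeping.

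There is, however, a genuine conceptual slip in your ``main obstacle'' paragraph. The uncertainty in~\eqref{eq.sys-main} is \emph{parametric and time-invariant}: for each admissible $(\Delta A,\Delta B,k^\star)$ one has a single LTI system with a bounded additive disturbance, not a linear differential inclusion. The quantifier order in~\eqref{eq.forallthereexist} is precisely $\forall(\Delta A,\Delta B)\,\exists P$, so $P$ (and hence the resulting $\eps$) is allowed to depend on the realization; no uniform ultimate bound across the box is required, and the issue you flag simply does not arise. Worse, the fix you propose is false as stated: pointwise Hurwitzness of a compact matrix family is \emph{not} equivalent to uniform exponential stability of the associated differential inclusion---there are classical examples of two Hurwitz matrices whose switched system is unstable---so that step would fail if it were ever needed. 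Drop the inclusion language and the uniformity concern, and your argument becomes exactly the paper's.
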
}
\begin{proof} Recall that the nonlinear term in the dynamics~\eqref{eq.z_dynamic} is uniformly bounded due to Assumption~\ref{as.coeff}\ref{as.coeff.nonlinear}. Therefore, thanks to the classical result of \cite[Theorem 9.1]{khalil2002nonlinear}, the stability of the system~\eqref{eq.z_dynamic} is equivalent to the stability of the linear part described as
	\begin{align}
	    \label{eq.z_dynamic_without}
	    \dot{\xcl}=\left[\bar{A}+J^{\top}\Delta A J+(\bar{B}+J^{\top}\Delta B J)F\bar{C}\right]\xcl \,.
	\end{align}
	From the classical linear system theory, we know that the stability of \eqref{eq.z_dynamic_without} is equivalent to the existence of a quadratic Lyapunov function~$V(\xcl)=\xcl^{\top}P\xcl$, where the symmetric positive definite matrix~$P$ may in general depend on the uncertainty in the dynamics. This assertion can be mathematically translated to checking whether the given controller parameter~$F$ satisfies~\eqref{eq.forallthereexist}. Note that the order of the quantifies implies that the matrix~$P$ may depend on the uncertain parameter $\Delta A$ and $\Delta B$. The assertion~\eqref{eq.forallthereexist} is indeed a special case of the problem of an interval matrix's stability~\cite{nemirovskii1993several}, which is proven to be strongly NP-hard~\cite[Corollary 2.6]{ahmadi2019complexity}.
\end{proof}
	
A useful technique to deal with the assertion similar to~\eqref{eq.forallthereexist} is to choose a so-called common Lyapunov function~\cite{ooba1997common}. Namely, we aim to find a positive-definite matrix~$P$ for all possible model parameters, i.e., the assertion~\eqref{eq.forallthereexist} is replaced with a more conservative requirement as follows:
\begin{align}\label{eq.thereexistforall}
    \begin{matrix*}[l]
    \exists P \in S^{\nx+\enu}_{\succ 0}\quad\forall \Delta A,\Delta B: |\Delta A| \leq \Mbdd, \, |\Delta B|\leq \Nbdd \\
    \Sym{P\left(\bar{A}+J^{\top}\Delta A J\right.\left.+(\bar{B}+J^{\top}\Delta B J)F\bar{C}\right)} \preceq 0.
    \end{matrix*}
\end{align}
Note that the only difference between \eqref{eq.forallthereexist} and the conservative assertion in \eqref{eq.thereexistforall} is the order of quantifiers between the Lyapunov matrix~$P$ and the linear dynamics uncertainties~$\Delta A$ and $\Delta B$. The argument~\eqref{eq.thereexistforall} is a special subclass of problems known as the ``matrix cube problems"~\cite{ben2002tractable}. While this class of problems is also provably hard~\cite[Proposition 4.1]{ben2002tractable}, the state-of-the-art in the convex optimization literature offers an attractive sufficient condition where the resulting conservatism is bounded {\em independently} of the size of the problem~\cite{ben2003extended}. Building on these developments, we will provide an optimization framework to design the controller \textcolor{black}{parameters} along with a corresponding common Lyapunov function.
	
\begin{Thm} [Robust control \& common Lyapunov function]	\label{thm.Continuous}
	Consider the system~\eqref{eq.sys-main}, {satisfying} Assumption~\ref{as.coeff}, and the {controller}~\eqref{eq.controller1}. \textcolor{black}{Also,} consider the optimization program
	\begin{align}\label{eq.continuous_NMI}
		\left\{
		\begin{matrix*}[l]
		\max \hspace{2mm}{\alpha\zeta^{-1}}\\\vspace{1mm}
		~{\rm s.t.} \hspace{3mm} \alpha \in \R, \quad {\zeta}, \kappa_{ij},\mu_{ik} \in \R_{>0},\quad
		 P \in \mathbb{S}^{\nx+\enu}_{\succ 0}, \text{\space}\Cc \in \R^{\enu \times \enu},\text{\space} \Bc,\Dc \in \R^{\enu \times \ny}
		\\\vspace{1mm}\hspace{9mm}F = \begin{bmatrix} \Dc & \Cc \\ \Bc & 0  \end{bmatrix}, \text{\space}M=\Sym{P\bar{A}+P\bar{B}F\bar{C}}+\alpha I
		\\\vspace{1mm}\hspace{9mm}G_1=\BlkDiag{-\kappa_{ij}a_{b_{ij}}^{-2}}_{i,j},\text{\space}G_2=\BlkDiag{-\mu_{ik}b_{b_{ik}}^{-2}}_{i,k},\,G_3=\BlkDiag{-\mu_{ik}^{-1}}_{i,k}
		\\\hspace{9mm}H_1=PJ^{\top}(\mathds{1}_\nx\otimes I_\nx),\,H_2=\bar{C}^{\top}F^{\top}J^{\top}\begin{bmatrix}\mathds{1}_\enu\otimes e_1 &\dots&\mathds{1}_\enu\otimes e_\nx\end{bmatrix}
		\\\vspace{1mm}\hspace{9mm}\begin{bmatrix}
		M+\sum_{i,j}\kappa_{ij}J^{\top}e_j^{\top}e_jJ&*&*&*&* \\
		H_1^{\top} &G_1&*&*&*\\
		H_1^{\top}&0&G_2&*&*\\
		H_2^{\top}&0&0&G_3&*\\
		JP&0&0&0& - {\zeta} I,
		\end{bmatrix} \preceq 0
		\end{matrix*}\right.
	\end{align}	
	where $\alpha_*, \zeta_*$ and $P_*$ denote the optimal solutions of corresponding decision variables. \textcolor{black}{If $\alpha_*>0$, then the controller {provides $\epsc$-practical output regulation~\eqref{eq.terminal} where}}
	\begin{align}\label{epsc}
		\epsc = \rhob\|\bar{C}\|\sqrt{\dfrac{\lambda_{\max}(P_*)}{\textcolor{black}{\alpha_*\zeta^{-1}_*}\lambda_{\min}(P_*)}} \,.
	\end{align}
	In particular, {if $\rhob = 0$ (\textcolor{black}{i.e.}, the nonlinear term vanishes to a constant)} and $\alpha_* > 0$, then the closed-loop system is exponentially stable and $\lim_{t \rightarrow \infty} y(t) = \ydes$.
\end{Thm}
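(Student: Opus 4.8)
The plan is to establish $\epsc$-practical output regulation by combining the robust Lyapunov certificate encoded in the linear matrix inequality of~\eqref{eq.continuous_NMI} with a standard ISS-type estimate accounting for the bounded nonlinear term. First I would unpack the meaning of the big LMI: via a Schur complement argument applied to the last row/column, the block inequality is equivalent to a matrix-cube condition guaranteeing that $\Sym{P(\bar A + J^\top\Delta A J + (\bar B + J^\top\Delta B J)F\bar C)} \preceq -\alpha P_{\text{norm}}$-type bound holds \emph{uniformly} over all $|\Delta A|\le\Mbdd$, $|\Delta B|\le\Nbdd$; here the auxiliary variables $\kappa_{ij}$, $\mu_{ik}$ and the blocks $G_1,G_2,G_3,H_1,H_2$ implement the Ben-Tal--Nemirovski relaxation~\cite{ben2003extended} of the matrix-cube membership, and $\zeta$ together with the $JP,-\zeta I$ block is what couples in the gain from the nonlinearity channel $J^\top(k^\star(J^\top\xcl) - k^\star(\xdes))$. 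So the first key step is to read off from feasibility of~\eqref{eq.continuous_NMI} that, with $V(\xcl) = \xcl^\top P_* \xcl$, one has along trajectories of~\eqref{eq.z_dynamic}
\be\label{eq.decrease}
\dot V(\xcl) \le -\alpha_*\|\xcl\|^2 + \tfrac{1}{\zeta_*}\|P_*\xcl\|^2\cdot(\text{nonlinear input})^2 \;\le\; -\alpha_*\zeta_*^{-1}\,\text{(something)} + \rhob^2\|\bar C\|^2 \cdot(\cdots),
\ee
i.e. a dissipation inequality of the form $\dot V \le -\beta\|\xcl\|^2 + \gamma$ with $\beta$ proportional to $\alpha_*\zeta_*^{-1}$ (the quantity being maximized) and $\gamma$ proportional to $\rhob^2$; the bound $\|k^\star(J^\top\xcl) - k^\star(\xdes)\|\le\rhob$ comes directly from Assumption~\ref{as.coeff}\ref{as.coeff.nonlinear}.

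Second, given~\eqref{eq.decrease} and $\lambda_{\min}(P_*)\|\xcl\|^2 \le V(\xcl) \le \lambda_{\max}(P_*)\|\xcl\|^2$, a routine comparison-lemma / invariant-ellipsoid argument shows $\limsup_{t\to\infty} V(\xcl(t)) \le \gamma/\beta \cdot \lambda_{\max}(P_*)$ up to constants, hence $\limsup_{t\to\infty}\|\xcl(t)\| \le \rhob\sqrt{\lambda_{\max}(P_*)/(\alpha_*\zeta_*^{-1}\lambda_{\min}(P_*))}$ after tracking the constants; this in particular gives $\sup_{t\ge0}\|x(t)\| < \infty$, the Lagrange-stability half of~\eqref{eq.terminal}. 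Since $y(t) - \ydes = C(x(t) - \xdes) = CJ\xcl(t)$ and $\|CJ\| \le \|\bar C\|$ by the block structure in~\eqref{M_define}, multiplying the state bound by $\|\bar C\|$ yields exactly the claimed $\epsc$ in~\eqref{epsc}; the $\max\{0,\alpha_*\zeta_*^{-1}\}$ in the denominator is just the safeguard ensuring the estimate is only asserted when the dissipation rate is genuinely positive. Finally, the special case $\rhob = 0$ makes $\gamma = 0$, so~\eqref{eq.decrease} becomes $\dot V \le -\beta\|\xcl\|^2 \le -(\beta/\lambda_{\max}(P_*))V$ with $\beta > 0$ when $\alpha_* > 0$, giving exponential convergence $\xcl(t)\to 0$ and hence $y(t)\to\ydes$; here one also invokes Proposition~\ref{prop.interactability}'s observation (via \cite[Theorem 9.1]{khalil2002nonlinear}) that the constant nonlinearity does not affect stability.

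The main obstacle I anticipate is the first step: correctly verifying that the specific block pattern of the LMI in~\eqref{eq.continuous_NMI}, with its Kronecker-product data $H_1 = P_*J^\top(\mathds{1}_\nx\otimes I_\nx)$ and $H_2$, and the sign-pattern diagonal blocks $G_1 = \BlkDiag{-\kappa_{ij}a_{b_{ij}}^{-2}}$ etc., is indeed an exact encoding of the robust dissipation inequality~\eqref{eq.decrease} uniformly over the matrix box. This requires carefully reproducing the Ben-Tal--Nemirovski / S-procedure bookkeeping — writing $\Delta A = \sum_{i,j}\delta^A_{ij}e_ie_j^\top$ with $|\delta^A_{ij}|\le a_{b_{ij}}$, bounding each rank-one perturbation term $\Sym{PJ^\top\delta^A_{ij}e_ie_j^\top J}$ via Young's inequality with multiplier $\kappa_{ij}$, and collecting everything through Schur complements into the displayed $5\times5$ block matrix — and matching it term by term against the claimed structure. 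Once that identification is in place, steps two and three are the standard, essentially mechanical, Lyapunov-to-practical-stability passage.
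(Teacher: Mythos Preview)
Your proposal is correct and follows essentially the same route as the paper: apply Young's inequality to the nonlinear cross-term to obtain $\dot V \le -\alpha_*\|\xcl\|^2 + \zeta_*\rhob^2$ whenever the robust inequality~\eqref{eq.asymp}-type holds, then invoke the Ben-Tal--Nemirovski matrix-cube relaxation (elementwise decomposition $\Delta A=\sum_{i,j}\delta a_{ij}e_ie_j^\top$, Young-type bounds with multipliers $\kappa_{ij},\mu_{ik}$, and repeated Schur complements) to show that the block LMI in~\eqref{eq.continuous_NMI} is precisely a sufficient condition for that robust inequality, after which the comparison-lemma step and the $\|\bar C\|$ factor proceed exactly as you describe. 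One small correction to your sketch: the Young split places $\zeta^{-1}\xcl^\top PJ^\top JP\xcl$ \emph{inside} the LMI (this is the role of the last block row $JP$ against $-\zeta I$) and leaves the constant residual $+\zeta_*\rhob^2$, so in your notation $\beta=\alpha_*$ and $\gamma=\zeta_*\rhob^2$ (not $\beta\propto\alpha_*\zeta_*^{-1}$), with the ratio $\gamma/\beta=\zeta_*\rhob^2/\alpha_*$ producing~\eqref{epsc}; the factor $\|\bar C\|$ enters only at the final passage from $\xcl$ to $y-\ydes$.
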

\begin{proof}
	{Consider }a quadratic Lyapunov function $V(z)=\xcl^{\top}P\xcl$. {The \textcolor{black}{time-}derivative of $V$} along the trajectories of \eqref{eq.z_dynamic} {is} 
	\begin{align*}
		\dfrac{1}{2} \dfrac{d}{dt} V(\xcl) & = {\xcl^{\top}P\left(\bar{A}+\bar{B}F\bar{C}\right)\xcl}
		 + \, {\xcl^\top P\left( J^{\top}\Delta A J+J^{\top}\Delta B JF\bar{C}\right)\xcl}  + {\xcl^\top P J^\top (k^{\star}(J^\top \xcl)-k^{\star}(\xstar))},
	\end{align*}
	where the last term involving the nonlinear term can be estimated by invoking \textcolor{black}{the} Young's inequality as follows.
	\begin{align}
		\nonumber
		2{\xcl^\top P J^\top \big(k^{\star}(J^\top \xcl )-k^{\star}(\xstar)\big)}\leq \zeta^{-1} \xcl^\top P J^\top J P \xcl
		+ {\zeta}\big\|k^{\star}(J^\top \xcl)-k^{\star}(\xstar))\big\|^2 
		\leq \, \zeta^{-1} \xcl^\top P J^\top J P \xcl+{\zeta} \rhob^2 \,.
	\end{align}
	Notice that the parameter~$\zeta \in \Rp$ is a positive scalar, and the last inequality is an immediate consequence of~\eqref{eq.assumption2}. In the light of the latter estimate, one can observe that if the inequality
	\begin{align}
		\label{eq.asymp}
		\begin{array}{r}
		\Big[P(\bar{A}+\bar{B}F\bar{C}) + P( J^{\top}\Delta A J+J^{\top}\Delta B JF\bar{C}) +  {\dfrac{\zeta^{-1}}{2}}P J^\top J P \Big]^\dagger\preceq -\alpha I,
		\end{array}
	\end{align}
	holds for some $\alpha \in \Rp$, then the dynamics of the Lyapunov function value along with system trajectories \textcolor{black}{satisfy}
	\begin{align}
		{\label{eq.bound}}
		\dfrac{1}{2} \dfrac{d}{dt} V(\xcl) \le -\alpha \|\xcl\|^2 + \zeta \rhob^2 \le {\dfrac{-\alpha} {\lambda_{\max}(P_*)}}V(\xcl)+ \zeta \rhob^2.
	\end{align}
	The above observation implies that $\limsup_{t \rightarrow \infty}V\big(\xcl(t)\big) \le \lambda_{\max}(P_*) \zeta \rhob^2/\alpha$, which together with the simple bound $\lambda_{\min}(P_*) \|\xcl\|^2 \le V(\xcl)$, leads to
	\begin{align*}
		\limsup_{t \rightarrow \infty} \|y(t) - \ydes\| &\le \limsup_{t \rightarrow \infty} \|\bar{C}\| \|\xcl(t)\| 
		\le \limsup_{t \rightarrow \infty} \|\bar{C}\| \sqrt{\dfrac{V\big(\xcl(t)\big)}{\lambda_{\min}(P_*)}}
		\le \epsc \,,
	\end{align*}
	where $\epsc$ is defined as in \eqref{epsc}.
	Hence, the above observation indicates that under the requirement~\eqref{eq.asymp} for some $\alpha > 0$, the desired assertion holds. {Next}, we aim to replace the robust inequality~\eqref{eq.asymp} by a more conservative criterion, which in turn can be verified efficiently. This procedure consists of several steps. Introducing the variable~$M\coloneqq \Sym{P\bar{A}+P\bar{B}F\bar{C}}+\alpha {I}$, the inequality~\eqref{eq.asymp} {is} rewritten as
	\begin{align}
		& -M-\zeta^{-1} PJ^\top JP+\Big[P J^{\top}\sum_{i=1}^{\nx}\Big(\sum_{j=1}^{\nx}(\delta a_{{ij}})e_i^{\top}e_j\Big) J 
		\label{Lyap_derivative_1}
		 + PJ^{\top}\sum_{i=1}^{\nx}\Big(\sum_{k=1}^{\enu}(\delta b_{{ik}})e_i^{\top}e_k\Big) JF\bar{C}\Big]^\dagger \succeq 0,
	\end{align}
	where the uncertainty parameters are described element-wise as $\Delta A = [\delta a_{ij}]$ and $\Delta B = [\delta b_{ij}]$. Recall that the condition~\eqref{Lyap_derivative_1} has to hold for all uncertain parameters, i.e., it is a robust constraint. Thanks to \cite[Theorem 3.1]{ben2003extended}, constraint~\eqref{Lyap_derivative_1} holds if there exist parameters $D_{ij}$, $E_{ik}$, $\lambda_{ij}$, $\gamma_{ik}$, where $i,j \in \{1,\dots,\nx\}$ and $k \in \{1,\dots,\enu\}$, \textcolor{black}{such} that
	\begin{align}
		\label{nemirovskii}
		\begin{matrix*}[l]
		\begin{bmatrix}
		D_{ij}-\lambda_{ij} a_{b_{ij}}^2 \xcl^{\top}PJ^{\top}e_i^{\top}e_iJP\xcl & *\\ e_jJ\xcl & \lambda_{ij} I
		\end{bmatrix} \succeq 0, \vspace{1mm}\\
		\begin{bmatrix}
		E_{ik}-\gamma_{ik} b_{b_{ik}}^2 \xcl^{\top}PJ^{\top}e_i^{\top}e_iJP\xcl & * \\ e_kJF\bar{C}{\xcl} & \gamma_{ik} I
		\end{bmatrix} \succeq 0,	\vspace{1mm} \\
		-\xcl^\top\left(M+\zeta^{-1} PJ^\top JP\right)\xcl \, \geq\, \sum_{i,j} D_{ij} +  \sum_{i,k} E_{ik}.
		\end{matrix*}	
	\end{align}
	By deploying the standard Schur complement in the first two inequalities of \eqref{nemirovskii}, we arrive at
	\begin{align}
		\label{nemirovskii+shur}
		\begin{matrix*}[l]
		\lambda_{ij},\gamma_{ik} > 0, \vspace{1mm}\\
		D_{ij}-\lambda_{ij} a_{b_{ij}}^2 \xcl^{\top}PJ^{\top}e_i^{\top}e_iJP\xcl - \lambda_{ij}^{-1}\xcl^{\top}J^{\top}e_j^{\top}e_jJ\xcl\geq 0, \vspace{1mm}\\
		E_{ik}-\gamma_{ik} b_{b_{ij}}^2 \xcl^{\top}PJ^{\top}e_i^{\top}e_iJP\xcl -\gamma_{ik}^{-1}{\xcl}^{\top}\bar{C}^{\top}F^{\top} J^{\top}e_k^{\top}e_kJF\bar{C}{\xcl}\geq 0,	 \vspace{1mm}\\
		-\xcl^\top\left(M+\zeta^{-1} PJ^\top JP\right)\xcl \, \geq\, \sum_{i,j} D_{ij} +  \sum_{i,k} E_{ik}.	
		\end{matrix*}
	\end{align}
	Eliminating $\{D_{ij}\}_{i,j}$ \textcolor{black}{and} $\{E_{ik}\}_{ i,k}$ and \textcolor{black}{doing some} straightforward computation\textcolor{black}{s}, the above inequalities reduces to
	\begin{align}
		\begin{matrix*}[l]
		\label{nemirovskii+shur_1}
		\lambda_{ij},\gamma_{ik} > 0, \vspace{1mm}\\
		M+\zeta^{-1} PJ^\top JP+\sum_{i,j}\kappa_{ij}J^{\top}e_j^{\top}e_jJ-H_1G_1^{-1} H_1^{\top}-H_1G_2^{-1} H_1^{\top}-H_2G_3^{-1} H_2^{\top} \preceq 0,
		\end{matrix*}
	\end{align}
	where the matrices $G_1, G_2, G_3, H_1$, \textcolor{black}{and} $H_2$ are defined as in \eqref{eq.continuous_NMI}. The proof is then concluded by applying yet again the Schur complement to the inequality~\eqref{nemirovskii+shur_1} and replace the variables $\kappa_{ij} = \lambda_{ij}^{-1}$ and $\mu_{ik} = \gamma_{ik}^{-1}$. 
	We note that since $\zeta > 0$, then $\alpha \ge 0$ if and only the objective function~$\alpha\zeta^{-1} \ge 0$.
	Therefore, the explicit positivity constraint over the variable~$\alpha$ can be discarded without any impact on the assertion of the theorem. In fact, the elimination of this constraint allows the program~\eqref{eq.continuous_NMI} being always feasible.
	Finally, we also note that the second part of the assertion is a straightforward consequence of the bound \eqref{epsc} and the fact that asymptotic stability and exponential stability in linear system coincide.
	\end{proof}
	
	The optimization program~\eqref{eq.continuous_NMI} in Theorem~\ref{thm.Continuous} is, in general, non-convex. We however highlight two important features of this program: (i) It is a tool enabling \emph{co-design} of a controller and \textcolor{black}{obtain} a Lyapunov function for the closed-loop system, and (ii) when the control parameters are fixed, the resulting program reduces to a linear matrix inequality (LMI), which is amenable to the off-the-shelves convex optimization solvers. The latter argument is formalized as follows.
	
\begin{Cor} [Controller certification via convex optimization]\label{cor.nominaldesign}
	Consider system~\eqref{eq.sys-main} satisfying Assumption~\ref{as.coeff} {that is closed through the feedback~\eqref{eq.controller1} with some fixed coefficients~\eqref{M_define}.} 
	Consider the optimization program	
\begin{align}
	\label{eq.tolerate}
	\left\{
	\begin{matrix*}[l]
	\max ~ \quad \alpha\zeta^{-1}\hspace{5mm}
	\\
	~{\rm s.t.} \hspace{6mm} \alpha \in \R, \text{\space}\zeta, \kappa_{ij} , \mu_{ik} \in \Rp,\text{\space} P \in \mathbb{S}_{\succ 0}^{\nx + \enu} \\
	\vspace{1mm}\hspace{12mm} M^\prime=M+\sum_{i,j}\kappa_{ij}J^{\top}e_j^{\top}e_jJ-H_2G_3^{-1}H_2^{\top}\\\vspace{1mm}
	\hspace{11mm} \begin{bmatrix}
	M^\prime&*&*&* \\
	H_1^{\top} &G_1&*&*\\
	H_1^{\top}&0&G_2&*\\
	JP&0&0&-\zeta I
	\end{bmatrix} \preceq 0
	\end{matrix*}\right.
\end{align}
where the matrices~$C,F,G_1,G_2,G_3,H_1$, \textcolor{black}{and} $H_2$ are {defined on the basis of }the system and control parameters\footnote{Formally speaking, the objective function in~\eqref{eq.tolerate} is not convex. However, since the only source of nonconvexity is the scalar variable~$\zeta$, a straightforward approach is to select this variable through a grid-search or bisection.}. Let $\alpha_*$, ${\zeta}_*$, and ${P}_*$ denote an optimizer of the program~\eqref{eq.tolerate}. Then, if $\alpha_* > 0$, then the output target control~\eqref{eq.terminal} is fulfilled for all $\eps \ge \epsc$ as defined in \eqref{eq.tolerate}. Moreover, if $\alpha_* \leq 0$, then there \textcolor{black}{exist} dynamics matrices $A^\star$ and $B^\star$ such that
\begin{align}
	\nonumber
	|A^\star-A|\leq \frac{\pi}{2}\Mbdd, \quad |B^\star-B|\leq\frac{\pi}{2}\Nbdd,
\end{align}
	and the closed-loop system is unstable.
\end{Cor}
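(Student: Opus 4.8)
The plan is to treat the two assertions separately. For the first part ($\alpha_* > 0 \Rightarrow$ output target control holds), I would essentially specialize the proof of Theorem~\ref{thm.Continuous} to the case of a fixed controller matrix $F$. Concretely: since $F$ (and hence $\bar A,\bar B,\bar C$) is now a datum rather than a variable, the products $P\bar B F\bar C$ and $\bar C^\top F^\top J^\top(\cdots)$ become linear in $P$, so the nonlinear-matrix-inequality constraint in~\eqref{eq.continuous_NMI} degenerates into the genuine LMI displayed in~\eqref{eq.tolerate} — the only simplification worth spelling out is that the two terms $-H_2 G_3^{-1} H_2^\top$ and $\sum_{i,j}\kappa_{ij}J^\top e_j^\top e_j J$ can be folded into the $(1,1)$ block $M'$ via a (now harmless) Schur complement because they no longer couple to the decision variables $\Cc,\Bc,\Dc$. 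Given this, the chain of implications from~\eqref{eq.asymp} through~\eqref{nemirovskii+shur_1} in the proof of Theorem~\ref{thm.Continuous} goes through verbatim, yielding $\limsup_{t\to\infty}\|y(t)-\ydes\|\le\epsc$ with $\epsc$ as in~\eqref{epsc}; hence the $\eps$-practical output regulation~\eqref{eq.terminal} holds for every $\eps\ge\epsc$.

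The second part ($\alpha_*\le 0 \Rightarrow$ instability for some admissible $A^\star,B^\star$ with the bounds inflated by $\pi/2$) is the substantive claim, and it is here that I would invoke the \emph{tightness} side of the Ben-Tal–Nemirovski matrix-cube machinery~\cite{ben2003extended,ben2002tractable}. The idea: the LMI~\eqref{eq.tolerate} is the conservative sufficient condition for the robust (common-Lyapunov) feasibility statement~\eqref{eq.thereexistforall}; the quoted results show this relaxation has a multiplicative gap bounded by $\pi/2$ (this is the $\vartheta(\cdot)\le\pi/2$ bound for the real matrix cube). So if~\eqref{eq.tolerate} is infeasible in the strong sense that no $\alpha>0$ works (i.e. $\alpha_*\le0$), then the \emph{exact} robust statement~\eqref{eq.thereexistforall} must already fail on the $(\pi/2)$-enlarged uncertainty box: there is no single $P\succ0$ making $\Sym{P(\bar A + J^\top\Delta A J + (\bar B + J^\top\Delta B J)F\bar C)}\prec0$ for all $|\Delta A|\le\frac\pi2 \Mbdd$, $|\Delta B|\le\frac\pi2 \Nbdd$. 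Finally, I would argue that the failure of \emph{robust common-Lyapunov} feasibility on an interval matrix box forces an \emph{actual} unstable vertex: for the purposes of this corollary I would lean on the fact (again from the interval-stability literature, cf.~the discussion around~\eqref{eq.forallthereexist}) that on a box of matrices, common quadratic stability and robust stability are linked tightly enough — or, more directly, pick the realization attaining the worst case in the matrix-cube dual and check it has an eigenvalue in the closed right half-plane — so that infeasibility certifies an unstable closed-loop $(A^\star,B^\star)$ within the enlarged bounds.

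The main obstacle I anticipate is precisely the last logical jump: getting from ``the relaxed LMI fails'' to ``an honest unstable $(A^\star,B^\star)$ exists,'' since the Ben-Tal–Nemirovski result is naturally phrased as a gap between the relaxation and the robust \emph{semidefinite} feasibility problem~\eqref{eq.thereexistforall}, not directly as a statement about the spectral abscissa of closed-loop matrices. Bridging this requires either (i) the observation that for the nominal-design case the closed-loop matrix depends affinely on $(\Delta A,\Delta B)$, so robust stability over the box is equivalent to stability at all vertices, combined with the classical equivalence between robust stability and the \emph{existence} (per-instance) of a Lyapunov matrix — whence failure of the vertex-stability gives the unstable instance — or (ii) a more careful extraction of the worst-case perturbation from the certificate of infeasibility. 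I would go with route (i): it keeps the argument self-contained given Proposition~\ref{prop.interactability}'s equivalence~\eqref{eq.forallthereexist} and the standard Lyapunov characterization, and the $\pi/2$ factor then comes entirely from the one quoted approximation bound. The remaining steps — the Schur-complement bookkeeping in part one and checking that $\epsc$ is unchanged — are routine.
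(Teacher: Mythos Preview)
Your treatment of the first assertion is correct and matches the paper exactly: once $F$ is fixed, $H_2$ is a datum, so the Schur complement on the $G_3$ block is legitimate and collapses~\eqref{eq.continuous_NMI} to~\eqref{eq.tolerate}; the rest of the proof of Theorem~\ref{thm.Continuous} carries over unchanged.

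For the second assertion, you are right to flag the logical jump, and in fact you are being more careful than the paper, which simply cites~\cite[Theorem~3.1]{ben2003extended} for the $\pi/2$ tightness of the step \eqref{Lyap_derivative_1}$\to$\eqref{nemirovskii} and stops. The problem is that your proposed route~(i) is wrong. The claim ``the closed-loop matrix depends affinely on $(\Delta A,\Delta B)$, so robust stability over the box is equivalent to stability at all vertices'' is false for matrix intervals: if it were true, interval-matrix Hurwitz stability would be decidable in polynomial time by checking finitely many vertices, directly contradicting the NP-hardness recorded in Proposition~\ref{prop.interactability}. So route~(i) cannot close the gap.

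What the Ben-Tal--Nemirovski bound actually yields is this: if $\alpha_*\le 0$, then for every $P\succ 0$ the \emph{robust} inequality~\eqref{Lyap_derivative_1} fails on the $(\pi/2)$-enlarged box, i.e.\ the common-Lyapunov condition~\eqref{eq.thereexistforall} is infeasible there. Going from ``no common quadratic Lyapunov function on the enlarged box'' to ``some specific $(A^\star,B^\star)$ in the enlarged box gives an unstable closed loop'' is a further implication which is \emph{not} true in general (a finite family of Hurwitz matrices need not share a common quadratic Lyapunov function). The paper does not supply this step either; the corollary's second statement, read literally, is stronger than what the cited tightness result delivers on its own. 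Your route~(ii) --- extracting a worst-case perturbation from an infeasibility certificate --- is the more honest direction, but it would require additional structure (or a weakening of ``unstable'' to ``no common quadratic Lyapunov certificate exists'') to go through.
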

	
\begin{proof}
	{Considering} the optimization program~\eqref{eq.continuous_NMI} {with fixed matrix $F$},
	the matrix~$H_2$ is also {fixed. The first statement is obtained by}
	applying the standard Schur complement as in~\eqref{nemirovskii+shur_1}. 
	{The second statement follows from}
	~\cite[Theorem 3.1]{ben2003extended} stating that the convex characterization of~\eqref{eq.thereexistforall} (i.e., the step from \eqref{Lyap_derivative_1} to \eqref{nemirovskii}) is tight up to multiplier~$\pi/2$.
\end{proof}
	
We close this section by a remark on the different sources of conservatism concerning the approach proposed in this section. {It is needless to say that any numerical progress at the frontier of each of these sources will lead to an improvement of the solution method in this article.}
	
\begin{Rem}[Conservatism of the proposed approach]\label{rem.conservatism}
	The path from the output target control~\eqref{eq.terminal} to the numerical solution of the optimization program~\eqref{eq.continuous_NMI} constitutes three steps that are only sufficient conditions and may contribute to the level of conservatism:
	(i) to restrict to a common Lyapunov function, i.e., the transition from \eqref{eq.forallthereexist} to 
	(ii) to apply the state-of-the-art matrix cube problem from \eqref{Lyap_derivative_1} to \eqref{nemirovskii}, and
	(iii) to numerically solve the finite, but possibly nonconvex, optimization program~\eqref{eq.continuous_NMI}.
	As detailed in Corollary~\ref{cor.nominaldesign}, the conservatism introduced by step (ii) is actually tight up to a constant independently of the dimension of the problem. {With regards to the nonconvexity issue raised in step (iii), we will examine a recent approximation technique proposed by~\cite{lee2018sequential} that is particularly tailored to deal with bilinearity of a similar kind in Theorem~\ref{thm.Continuous}; this will be reported in Section~\ref{Sec:Numerical}.}
\end{Rem}
\section{{Aperiodic event-triggered robust control}} \label{sec:Triggered}
{In this section, we address Problem~\ref{prob.def}\ref{prob.dis} aiming to synthesize a \emph{sampled-time} counterpart of the controller,} \textcolor{black}{which can access the system output~$y(\cdot)$ only at \emph{sampled} instants~$\{t_s\}_{s \in \N}$. The sequence~$t_s$ is \emph{predefined} by, for instance, an external message scheduler. Throughout this study we require that $t_s < t_{s+1}$ and $t_s$ tends to infinity when $s$ increases. The latter is a sufficient condition to ensure a ``{\em Zeno-free}" control design, a necessary requirement to avoid possible infinite switches in a finite-time period. We note that the inter-sampling intervals $t_{s+1}-t_s$ need not be constant, i.e., we allow an arbitrary \emph{aperiodic} time sampling. Continuous-time controller~\eqref{eq.controller1} is then naturally replaced by its sampled-time \emph{emulation} where the output signal~$y(t)$ fed to~\eqref{eq.controller1} within each interval $[t_s,t_{s+1})$ is replaced by its latest measurement $y(t_s)$:}
\begin{align}\label{eq.emulation-w}
	\xc(t) = \xc(t_s) + (t - t_s)\Bc(y(t_s)-\ydes), {t\in [t_s,t_{s+1})}
\end{align}
On the actuation side, the simplest scenario is to compute the new control input upon receiving measurement~$y(t_s)$, which remains constant till the next measurement~$y(t_{s+1})$ arrives:
\begin{equation}\label{eq.emulation-u}
	u(t) = \Cc\xc(t_s)+\Dc\big(y(t_s)-\ydes\big),\quad t\in [t_s,t_{s+1}).
\end{equation}
Note that $u(t)$ takes a constant value within the time interval~$t \in [t_s, t_{s+1})$. More generally, one may consider an \emph{event-triggered} strategy: Upon arrival of the new measurement $y(t_s)$, the control input is updated only if a triggering condition is fulfilled. {This criteria may reflect how far the plant's output or the controller's state have visibly changed since the last time that the control signal was updated.} 
	
\textcolor{black}{Formally, assume that the control input has been updated for the last time at $t=t_j$. Upon the arrival of the new measurement $y(t_s)$, where $t_s>t_j$, the \emph{triggering} condition is validated that involves the vector $v(t_j,t_s) \Let \big[\xc(t_j)^\top, y(t_j)^\top, \xc(t_s)^\top, y(t_s)^\top\big]^\top$.}
	
Inspired by~\cite{heemels2013periodic}, we consider a triggering condition as follows
\begin{equation}\label{eq.triggering}
	\begin{bmatrix} v(t_j,t_s) \\ 1 \end{bmatrix}^\top \QV \begin{bmatrix} v(t_j,t_s) \\ 1 \end{bmatrix} \geq 0.
\end{equation}
The condition~\eqref{eq.triggering} is slightly more generalized than the one proposed in~\cite{heemels2013periodic} in a way that it also supports constant thresholds. Note that the information vector~$v(t_j,t_s)$ is augmented by a constant $1$. If~\eqref{eq.triggering} holds, the control input is updated: we set $j=s$ and find $u(t_j)=u(t_s)$ from~\eqref{eq.emulation-u}. 
In \textcolor{black}{the} case \textcolor{black}{that} \eqref{eq.triggering} does not hold, the control \textcolor{black}{input} remains unchanged till {at least} time~$t_{s+1}$. {This procedure is summarized in Algorithm~\ref{alg.trigger}.
\begin{algorithm}
	\caption{Aperiodic Event-Triggered Control~(AETC)}
	\label{alg.trigger}
	\begin{algorithmic}[1]
		\\ \textbf{Initialization}: \textcolor{black}{Consider} sample instants $\{t_s\}_{s \in \N}$, initial measurement~$y_0$, \textcolor{black}{and} initial control state $\xc_0=0$. Set $j=0$, compute $u_0$ from~\eqref{eq.emulation-u}, and send it to the system~\eqref{eq.sys-main}.
		\\ \textbf{Upon receiving $y(t_s)$}, find $\xc (t_s)$ from \eqref{eq.emulation-w}.
		\begin{itemize}
			\item If~\eqref{eq.triggering} holds, then set $j \leftarrow s$, compute $u(t_j)=u(t_s)$ from~\eqref{eq.emulation-u} and send it to the system~\eqref{eq.sys-main};
			\item otherwise, keep~$u(t_s) = u(t_j)$ for $t\in [t_s,t_{s+1})$, i.e., nothing is required to be communicated to~\eqref{eq.sys-main}.
		\end{itemize}
		\\ \textbf{Set } $s \leftarrow s+1$ and go to step 2.
	\end{algorithmic}
\end{algorithm}}
	
\begin{Rem}[Special triggering mechanisms]\label{rem:special}
	If in \eqref{eq.triggering} $\QV = 0$, the {control strategy} reduces to the usual aperiodic sampled-time (or digital) control. As pointed out in~\cite{heemels2013periodic}, the quadratic form~\eqref{eq.triggering} subsumes the relative event-triggered mechanism~\cite{tabuada2007event}. The mechanism~\eqref{eq.triggering} includes
	the absolute event-triggered mechanism~\cite{zhou2019periodic} and mixed event-triggered mechanism~\cite{donkers2012output} \textcolor{black}{as its special cases}. More specifically, when
	\begin{align}\label{Q0}
	\QV=\Qtau(\qt_0,\qt_1) \Let
	\left[\begin{matrix}
	I & * &* & * & * \\
	0 & I & * & * & * \\
	-I & 0 & I-\qt_1I &* & * \\
	0 & -I & 0 & I-\qt_1I & * \\
	0 & 0 &0 &0 &-\qt_0
	\end{matrix}\right],
	\end{align}
	the triggering mechanism~\eqref{eq.triggering} is translated into the condition
	\begin{align}\label{eq.etc-structure}
	\left\|\begin{bmatrix} \xc(t_s)-\xc(t_j) \\ y(t_s)- y(t_j) \end{bmatrix}\right\|^2 \ge \qt_0 + \qt_1\left\|\begin{bmatrix} \xc(t_s) \\ y(t_s) \end{bmatrix}\right\|^2 \,.
	\end{align}
\end{Rem}
In summary, the aperiodic event-triggered control (AETC) mechanism introduced above entails two key components: the time instants $\{t_s\}_{s \in \N}$, and the triggering mechanism~\eqref{eq.triggering} characterized by the matrix~$\QV$. \textcolor{black}{By definition, we know that $t_s\to\infty$, and as such, all solutions of the closed-loop system are forward complete, i.e., no {\em Zeno} trajectories may exist.} In the rest of this section, we {analyze the sampled-time event-triggered emulation of the dynamic controller from} Section~\ref{sec:Continuous Robust Control} {and provide sufficient conditions ensuring~\eqref{eq.terminal}}.

Let us fix the controller parameters to a feasible solution~$({\Bc}_*,{\Cc}_*,{\Dc}_*)$ of the optimization program~\eqref{eq.continuous_NMI} along with the Lyapunov matrix~$P_*$. For the brevity of the exposition, we also introduce the following notation:
\begin{align} 
    \label{theta_MN}
    \begin{gathered}
    	\hat{F}_* \Let \begin{bmatrix} {\Dc}_* & {\Cc}_* \\ 0 & 0  \end{bmatrix},
        \quad \beta \coloneqq \|P_*\|\|{\Bc}_*\bar{C}\|, \quad
        \varrho_B \Let \left(\|\bar{B}\|+\|\Nbdd\|\right)^2\|\hat{F}_*\|^2,\\
        \varrho_{AB} \Let \varrho_B\|\bar{C}\|^2+\left(\|\bar{A}\|+\|\Mbdd\|\right)^2,\quad
        \vartheta_B \Let \max\limits_{|\Delta B|\leq  \Nbdd}\|P_*(\bar{B}+J^\top\Delta BJ)\hat{F}_*\|,\\
     	\vartheta_{AB} \Let \max\limits_{|\Delta A|\leq \Mbdd, \, |\Delta B|\leq \Nbdd}\|\bar{A}+J^{\top}{\Delta A}J +(\bar{B}+J^\top\Delta BJ-I)(F_*-\hat{F}_*)\|, \\ 	\mathfrak{e}(h) \Let \vartheta_{AB}^{-1}(e^{\vartheta_{AB} h}-1)
    \end{gathered}
\end{align}
Now we want to proceed with the main result of this section.
\begin{Thm}[Certified robust regulation under AETC]
	\label{thm.aperiodic}
	Consider the system~\eqref{eq.sys-main} obeying Assumption~\ref{as.coeff}. Let the matrices~$({\Bc}_*,{\Cc}_*,{\Dc}_*,P_*,\alpha_*,\zeta_*)$ be a feasible solution to optimization problem~\eqref{eq.continuous_NMI} where $\alpha_* > 0$. Consider the AETC {in Algorithm~\ref{alg.trigger}}, where the sequence $\{t_s\}_{s\in\N}$ and matrix $\QV$ are such that
	\[\hb\Let\sup_{s \in \N} (t_{s+1}-t_s) \le h_{\max} \quad \text{and} \quad \QV \preceq \Qtau(\qt_0,\qt_1).
	\]
	Here $\Qtau(\qt_0,\qt_1)$ \textcolor{black}{is given by}~\eqref{Q0} with some constants~$\qt_0, \qt_1 \ge 0$ and
	\begin{align}
    \label{eq.hmax_definition}
        h_{\max} \coloneqq \vartheta_{AB}^{-1}\ln\left(1+\vartheta_{AB}\sqrt{\dfrac{\alpha_*^2{\sqrt{\qt_1}\lambda_{\min}(P_*)}[(1+2\sqrt{\qt_1})^2\lambda_{\max}(P_*)]^{-1}-2\vartheta_B^2\qt_1\|\bar{C}\|^2}{6\vartheta_B^2(\qt_1\varrho_B\|\bar{C}\|^4+6\varrho_{AB}\|\bar{C}\|^2)+3\beta^2(\varrho_B\qt_1\|\bar{C}\|^2+\varrho_{AB})^2}}\right).
    \end{align}
	Then, the closed-loop system under AETC is $\epsd$-practical {output} stable in the sense of~\eqref{eq.terminal} where
	\begin{equation}
	\label{eq.eps_d.def}
	\epsd^2=\mathfrak{f}_1(\hb,{\qt_1})\qt_0+\mathfrak{f}_2(\hb,\qt_1)\rhob^2,
	\end{equation}
	in which the constants~$\mathfrak{f}_1$ \textcolor{black}{and} $\mathfrak{f}_2$ {can be explicitly expressed in form~\eqref{eq.f.def}, depending only on $\hb, \qt_1, P_\star, \bar{C}$, and parameters~\eqref{theta_MN}.
	}
	\scriptsize
    {
    \begin{subequations}
        \label{eq.f.def}
            \begin{align}
                \label{eq.f1.def}
			    \mathfrak{f}_1\left(\hb,\qt_1\right)&\coloneqq \dfrac{\vartheta_B^2\left(2+6\varrho_B\|\bar{C}\|^2\mathfrak{e}^2(\hb)\right)\|\bar{C}\|^4+3\beta^2\varrho_B\|\bar{C}\|^4\mathfrak{e}^2(\hb)}{-\vartheta_B^2\left(2\qt_1\|\bar{C}\|^2+6\qt_1\varrho_B\|\bar{C}\|^4\mathfrak{e}^2(\hb)+6\varrho_{AB}\|\bar{C}\|^2\mathfrak{e}^2(\hb)\right)-3\beta^2(\varrho_B\qt_1\|\bar{C}\|^2+\varrho_{AB})^2\mathfrak{e}^2(\hb)+\alpha_*^2\dfrac{\sqrt{\qt_1}\lambda_{\min}(P_*)}{(1+2\sqrt{\qt_1})^2\lambda_{\max}(P_*)}},\\
                \label{eq.f2.def}
			    \mathfrak{f}_2(\hb,\qt_1)&\coloneqq \dfrac{6\vartheta_B^2\|\bar{C}\|^6\mathfrak{e}^2(\hb)+3\beta^2\|\bar{C}\|^4\mathfrak{e}^2(\hb)+\alpha_*\zeta_*\|\bar{C}\|^2\sqrt{\qt_1}(1+2\sqrt{\qt_1})^{-1}}{-\vartheta_B^2\left(2\qt_1\|\bar{C}\|^2+6\qt_1\varrho_B\|\bar{C}\|^4\mathfrak{e}^2(\hb)+6\varrho_{AB}\|\bar{C}\|^2\mathfrak{e}^2(\hb)\right)-3\beta^2(\varrho_B\qt_1\|\bar{C}\|^2+\varrho_{AB})^2\mathfrak{e}^2(\hb)+\alpha_*^2\dfrac{\sqrt{\qt_1}\lambda_{\min}(P_*)}{(1+2\sqrt{\qt_1})^2\lambda_{\max}(P_*)}}.
            \end{align}
        \end{subequations}
    } \normalsize
\end{Thm}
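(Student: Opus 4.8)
The controller in Algorithm~\ref{alg.trigger} is an emulation of the continuous-time law~\eqref{eq.controller1}, so the plan is to regard the event-triggered closed loop as a \emph{perturbation} of the continuous-time closed loop~\eqref{eq.z_dynamic} already certified by Theorem~\ref{thm.Continuous}, and to push the same Lyapunov function $V(z)=z^\top P_*z$ through, tracking the size of the perturbation in terms of the maximal inter-sampling time $\hb$ and the triggering margins $\qt_0,\qt_1$. Fix $t\in[t_s,t_{s+1})$ and let $t_j\le t_s$ be the last update time. Substituting the held actuation~\eqref{eq.emulation-u} and the piecewise-affine controller state~\eqref{eq.emulation-w} into the plant and using the equilibrium identities behind Lemma~\ref{lem.equilibrium}, the augmented state $z=[x-\xdes;\,\xc-\wdes]$ obeys
\[\dot z(t)=\big[\bar A+J^\top\Delta AJ+(\bar B+J^\top\Delta BJ)F_*\bar C\big]z(t)+J^\top\tilde k(t)+g(t),\]
where $\tilde k(t)=k^\star(\cdot)-k^\star(\xdes)$ is the uniformly bounded nonlinearity and
\[g(t)=(\bar B+J^\top\Delta BJ)\hat F_*\bar C\big(z(t_j)-z(t)\big)+(F_*-\hat F_*)\bar C\big(z(t_s)-z(t)\big)\]
collects the perturbation due to the actuation \emph{hold} and to the controller-state \emph{sampling}. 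The key structural observation to exploit is that $\hat F_*\bar C$ and $(F_*-\hat F_*)\bar C$ only expose the output and controller-state components of $z$; hence $g(t)$ depends on $z(t_j)-z(t)$ and $z(t_s)-z(t)$ only through $y(t_j)-y(t)$, $\xc(t_j)-\xc(t)$ and $y(t_s)-y(t)$, which are precisely the quantities that the triggering rule~\eqref{eq.triggering}--\eqref{eq.etc-structure} and inter-sample continuity control, while the unobservable part of $x$ never enters.

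Next I would differentiate $V$ and invoke the estimate built inside the proof of Theorem~\ref{thm.Continuous}: for the feasible tuple $({\Bc}_*,{\Cc}_*,{\Dc}_*,P_*,\alpha_*,\zeta_*)$ inequality~\eqref{eq.asymp} holds for all admissible $\Delta A,\Delta B$, and combined with the Young bound $2z^\top P_*J^\top\tilde k\le\zeta_*^{-1}z^\top P_*J^\top JP_*z+\zeta_*\rhob^2$ this yields
\[\tfrac{d}{dt}V(z(t))\le-\alpha_*\|z(t)\|^2+\zeta_*\rhob^2+2z(t)^\top P_*g(t).\]
Using the bounds $\|P_*(\bar B+J^\top\Delta BJ)\hat F_*\|\le\vartheta_B$ and $\|P_*(F_*-\hat F_*)\bar C\|\le\beta$ from~\eqref{theta_MN}, the cross term is $\le 2\|z(t)\|$ times the norms of the hold and sampling errors, which I would bound in two independent ways. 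First, over a single interval $[t_s,t_{s+1})$ of length $\le\hb\le h_{\max}$, a Gr\"onwall estimate for $\|z(\cdot)-z(t_s)\|$ — with homogeneous rate bounded by $\vartheta_{AB}$ and the frozen forcing bounded through $\varrho_B,\varrho_{AB},\rhob$ and (after the reduction below) $\|z(t_s)\|$ — produces the growth factor $\mathfrak{e}(\hb)=\vartheta_{AB}^{-1}(e^{\vartheta_{AB}\hb}-1)$, so that $\|y(t_s)-y(t)\|\le\|C\|\,\|z(t_s)-z(t)\|$ and the analogous controller-state difference are controlled. Second, since no update occurred at $t_s$, the hypothesis $\QV\preceq\Qtau(\qt_0,\qt_1)$ applied to~\eqref{eq.triggering} yields (in the form~\eqref{eq.etc-structure}) $\|\xc(t_j)-\xc(t_s)\|^2+\|y(t_j)-y(t_s)\|^2<\qt_0+\qt_1\big\|[\xc(t_s);\,y(t_s)]\big\|^2$, and $\big\|[\xc(t_s);\,y(t_s)]\big\|\le\|\bar C\|\,\|z(t_s)\|+(\text{a constant depending on }\wdes,\ydes)$ ties the right-hand side to $\|z(t_s)\|$ up to a constant that is absorbed into an effective $\qt_0$. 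Splitting $y(t_j)-y(t)=(y(t_j)-y(t_s))+(y(t_s)-y(t))$, and likewise for $\xc$, combines the two estimates; the argument closes precisely because only these output and controller-state differences are needed — the full difference $\|z(t_j)-z(t_s)\|$ is not controlled.

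Substituting into the bound on $\tfrac{d}{dt}V$, expanding squares via $\|\sum_{i=1}^m a_i\|^2\le m\sum_i\|a_i\|^2$, reducing $\|z(t_s)\|$ and $\|z(t_j)\|$ (through the observable channel) to $\|z(t)\|$ plus Gr\"onwall remainder, and finally applying Young's inequality with a split parameter chosen to produce the factor $(1+2\sqrt{\qt_1})$, one reaches an inequality of the form
\[\tfrac{d}{dt}V(z(t))\le-c(\hb,\qt_1)\,\|z(t)\|^2+n_1(\hb,\qt_1)\,\qt_0+n_2(\hb,\qt_1)\,\rhob^2,\]
where $c(\hb,\qt_1)=\alpha_*^2\sqrt{\qt_1}\lambda_{\min}(P_*)\big[(1+2\sqrt{\qt_1})^2\lambda_{\max}(P_*)\big]^{-1}$ minus a quantity strictly increasing in $\mathfrak{e}(\hb)$, and $n_1,n_2$ are the numerators appearing in~\eqref{eq.f1.def}--\eqref{eq.f2.def}. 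The constant $h_{\max}$ in~\eqref{eq.hmax_definition} is exactly the threshold at which $c(\hb,\qt_1)=0$: solving this equation for $\mathfrak{e}(\hb)$ and inverting $\mathfrak{e}(\hb)=\vartheta_{AB}^{-1}(e^{\vartheta_{AB}\hb}-1)$ reproduces the logarithmic formula. For $\hb\le h_{\max}$ one has $c>0$, hence $\tfrac{d}{dt}V\le-\tfrac{c}{\lambda_{\max}(P_*)}V+(n_1\qt_0+n_2\rhob^2)$; the comparison lemma then gives $\sup_{t\ge0}\|z(t)\|<\infty$ and $\limsup_{t\to\infty}V(z(t))\le\lambda_{\max}(P_*)(n_1\qt_0+n_2\rhob^2)/c$, so that $\limsup_{t\to\infty}\|y(t)-\ydes\|\le\|\bar C\|\limsup_{t\to\infty}\|z(t)\|\le\epsd$ with $\epsd^2=\mathfrak{f}_1(\hb,\qt_1)\qt_0+\mathfrak{f}_2(\hb,\qt_1)\rhob^2$ and $\mathfrak{f}_i=n_i/c$, which is~\eqref{eq.eps_d.def}--\eqref{eq.f.def}.

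The conceptual skeleton above is routine; the genuine difficulty — and where errors are easy to make — is the quantitative bookkeeping. The inter-sample Gr\"onwall estimate is \emph{implicit}, since $g$ involves $z$ at the current, last-sampled, and last-transmitted instants, so one must set up the comparison ODE for $\|z(\cdot)-z(t_s)\|$, feed its own output back consistently, and only then collect terms; pinning down the exact powers of $\mathfrak{e}(\hb)$ and the precise roles of $\vartheta_B,\vartheta_{AB},\varrho_B,\varrho_{AB},\beta$ in each slot is delicate. Equally laborious is assembling the many terms generated by the triangle splits, the $m\sum\|\cdot\|^2$ inflations, and the Young splits into the exact closed forms~\eqref{eq.hmax_definition} and~\eqref{eq.f.def} — in particular justifying the specific integer constants $2,3,6$ and the placement of the factor $(1+2\sqrt{\qt_1})$. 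A secondary subtlety is reconciling the raw norm $\big\|[\xc(t_s);\,y(t_s)]\big\|$ in the triggering rule with the shifted coordinate $\|z(t_s)\|$ that drives the Lyapunov analysis, whose gap must be folded into an effective $\qt_0$.
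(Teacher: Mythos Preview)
Your skeleton matches the paper closely: the same perturbation decomposition of the sampled closed loop into the continuous-time part plus two error terms (the actuation-hold error $e(t)=\bar C(z(t_j)-z(t))$ and the sampling error $\bar z(t)=z(t)-z(t_s)$), the same Lyapunov function, the same two Young splits with parameters $\psi_1=\sigma_1\vartheta_B^{-2}\alpha_*$, $\psi_2=\sigma_2\beta^{-2}\alpha_*$ and the choice $\sigma_1=\sigma_2=\sqrt{\qt_1}/(1+2\sqrt{\qt_1})$, and the same Gr\"onwall-type bound on $\|\bar z(t)\|$ producing the factor $\mathfrak{e}(\hb)$.

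The one substantive deviation is your endgame. You aim to ``reduce $\|z(t_s)\|$ to $\|z(t)\|$ plus a Gr\"onwall remainder'' and land on a clean differential inequality $\dot V\le -c(\hb,\qt_1)\|z(t)\|^2+\text{const}$. The paper does \emph{not} do this: it stops at the mixed inequality
\[
\dot V(z(t))\le -\alpha_*(1-\sigma_1-\sigma_2)\|z(t)\|^2+\mathfrak{g}_1\|z(t_s)\|^2+\mathfrak{g}_2,
\]
keeps $\|z(t_s)\|^2$ as a \emph{frozen} forcing on $[t_s,t_{s+1})$, integrates to obtain a discrete recursion $V(t_{s+1})\le \rho(h_s)V(t_s)+\text{const}$ via $\|z(t_s)\|^2\le V(t_s)/\lambda_{\min}(P_*)$, and then reads off $h_{\max}$ as the threshold where $\rho<1$. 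This discrete step is exactly what generates the ratio $\lambda_{\min}(P_*)/\lambda_{\max}(P_*)$ in the denominator of~\eqref{eq.f.def} and in~\eqref{eq.hmax_definition}. Your proposed reduction $\|z(t_s)\|\le\|z(t)\|+\|\bar z(t)\|$ is circular (the available Gr\"onwall bound on $\|\bar z(t)\|$ is itself in terms of $\|z(t_s)\|$), and even after solving the resulting implicit inequality you would not recover the stated constants without essentially redoing the paper's discrete argument.

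On your ``secondary subtlety'': you are right that the triggering rule bounds $\|[\xc(t_s);y(t_s)]\|$ rather than $\|\bar C z(t_s)\|$, and these differ by the equilibrium offset $[\wdes;\ydes]$. The paper passes directly to $\|e(t_s)\|^2\le \qt_0+\qt_1\|\bar C\|^2\|z(t_s)\|^2$ without comment, so your instinct to absorb the offset into an effective $\qt_0$ is a reasonable patch; just be aware that the paper's proof as written does not address it.
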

	
\begin{proof}
	Suppose $t\in [t_s,t_{s+1})$ and let $t_j\leq t_s$ be the last time instant when the control input was computed. Let $\xcl(t)$ be the state of the closed system defined in~\eqref{eq.newvariables}, and denote 
	\begin{equation*}
    	e(t)\coloneqq \begin{bmatrix}y(t_j)-y(t) \\ \xc(t_j)-\xc(t)\end{bmatrix}=\bar C(\xcl(t_j)-\xcl(t)),\,\bar {\xcl}(t):=\xcl(t)-\xcl(t_s).
	\end{equation*}
	where the matrix~$\bar{C}$ is defined in~\eqref{M_define}. Since~\eqref{eq.emulation-w} holds and $u(t)\equiv u(t_j)$ for $t\in[t_s,t_{s+1}]$, the closed-loop system's state evolves as 
    \begin{align}
        \label{eq.z_dynamic.dis}
        \dot{\xcl}(t) &=\left[\bar{A}+J^{\top}\Delta A  J + (\bar{B}+J^{\top}\Delta BJ){F_*}\bar{C}\right]\xcl(t) \\ \nonumber
        & \qquad + J^\top\left(k^\star\left(J^\top \xcl(t)\right)-k^{\star}(\xdes)\right)+(\hat{F}_*-F_*)\bar{C}{\bar {\xcl}(t)}   + (\bar{B}+J^{\top}\Delta BJ)\hat{F}_*e(t), \qquad
        t \in[t_s, t_{s+1}),
    \end{align}
	where the matrices~$\bar{A},\bar{B}$, {$J$} are defined in \eqref{M_define}. Consider the same Lyapunov function as in the continuous-time case $V(\xcl)=\xcl^{\top}P_*\xcl$
	whose time derivative along a trajectory of~\eqref{eq.z_dynamic.dis} can be computed by
    \begin{align}
         \label{lyap_der_p}
         \dfrac{1}{2}\dfrac{d}{dt}V(\xcl) & = \xcl^{\top}(t)P_*\Big((\bar{B}+J^{\top}\Delta BJ)\hat{F}_*e(t) \\ \nonumber
         & \quad  + \big(\bar{A}+J^{\top}\Delta A J+(\bar{B}+J^{\top}\Delta BJ){F_*}\bar{C}\big)\xcl(t) + (\hat{F}_*-F_*)\bar{C}{\bar {\xcl}(t)}+J^\top \big(k^{\star}(J^\top \xcl)-k^{\star}(\xdes)\big)\Big)\,.
    \end{align}	
	By assumption, we know that the objective function of the program~\eqref{eq.continuous_NMI} is positive, i.e., $\alpha_*\zeta^{-1}_*>0$. Due to Young's inequality,
	\begin{align} 
    	\nonumber
    	\begin{gathered}
    	2\xcl^{\top}(t)P_*\left(\bar{B}+J^\top\Delta BJ\right)\hat{F}_*e(t) 
    	\leq \psi_1\vartheta_B^2 \|\xcl(t)\|^2 + \psi_1^{-1}\|e(t)\|^2, \\
    	2\xcl^{\top}(t)P_*\big(\hat{F}_*-F_*\big)\bar{C}\bar{\xcl}(t)  
    	\leq \psi_2\beta^2 \|\xcl(t)\|^2+\psi_2^{-1}\|\bar{\xcl}(t)\|^2,
    	\end{gathered}
	\end{align}	
	where $\psi_1, \psi_2$ are two positive scalars to be specified later. Thus, the derivative $\dot V$ from~\eqref{lyap_der_p} can be estimated by
	\begin{align} 
	\label{lyap_der_p_1}
    	\dfrac{d}{dt} V(\xcl(t)) & \leq -(\alpha_*-\psi_1 \vartheta_B^2-\psi_2 \beta^2) \|\xcl(t)\|^2 + \zeta_*\rhob^2 +{\psi_1}^{-1} {\|e(t)\|^2}+{\psi_2}^{-1}{\|\bar {\xcl}(t)\|^2}.
	\end{align}
	One may also notice that since $\dot{\bar {\xcl}}(t)=\dot {\xcl}(t)$ and $e(t)=\bar{C}(\xcl(t_j)-\xcl(t_s))-\bar{C}\bar {\xcl}(t)$, the equation~\eqref{eq.z_dynamic.dis} is rewritten as
    \begin{align}
     \label{eq.zbar_dot}
    	&\dot{\bar{\xcl}}(t)   =\left[\bar{A}+J^{\top}\Delta A J+(\bar{B}+J^{\top}\Delta B J)F_*\bar{C}\right]\xcl(t_s)  + J^\top(k^\star(J^\top \xcl)-k^{\star}(\xdes)) \\ \nonumber
    	&  + (\bar{B}+J^{\top}\Delta BJ)\hat{F}_*\bar{C}(\xcl(t_j)-\xcl(t_s)) + \left[\bar{A}+J^{\top}{\Delta A}J+(\bar{B}+J^\top\Delta BJ-I)(F_*-\hat{F}_*)\right]\bar{C} \bar{\xcl}(t).
    \end{align}
	Recall that we have assumed $\hb \le h_{\max}$. Leveraging similar techniques as in \cite[Lemma 3]{kishida2017combined}, the solution of~\eqref{eq.zbar_dot} is estimated as
	\begin{align}
	\label{zbar_1}
    	& \|\bar{\xcl}(t)\| \leq  \Big[\left(\|\bar{B}\|+\|\Nbdd\|\right)\|\hat{F}_*\|\left\|e(t_s)\right\|+\rhob +\left(\|\bar{A}\|+\|\Mbdd\|+(\|\bar{B}\|+\|\Nbdd\|)\|{F}_*\bar{C}\|\right)\|\xcl(t_s)\|\Big]\mathfrak{e}(\hb)
	\end{align}
	where the constant~$\mathfrak{e}(h)$ is defined in~\eqref{theta_MN}. 
	Notice now that if $\QV \preceq \Qtau(\qt_0,\qt_1)$, we can conclude that $\|e(t_s)\|^2\leq \qt_0+\qt_1 \|\bar{C}\|^2 \|\xcl(t_s)\|^2$. This inequality automatically holds if $t_s=t_j$ (and $e(t_s)=0$). Otherwise, the triggering condition~\eqref{eq.triggering} is violated, whence
	\begin{align}
	\label{zbar_2}
	\|e(t)\|^2\leq \left(\|e(t_s)\|+\|e(t)-e(t_s)\|\right)^2 \leq 2\qt_0+2\qt_1\|\bar{C}\|^2\|\xcl(t_s)\|^2 + 2\|\bar{C}\|^2\|\bar{\xcl}(t)\|^2
	\end{align}
	for $t\in [t_s,t_{s+1}]$. Denote 
	\begin{align}
	\label{psi_values}
	\psi_1\coloneqq\sigma \vartheta_B^{-2}\alpha_*, \; \psi_2\coloneqq\sigma\beta^{-2}\alpha_*,\; \sigma\coloneqq {\sqrt{\qt_1}}({1+2\sqrt{\qt_1}})^{-1}.
	\end{align} 
	Equations \eqref{lyap_der_p_1} together with \eqref{zbar_1}-\eqref{psi_values} lead to
	\begin{align}
    	\label{lyap_der_p_2}
    	\dot{V}(\xcl(t)) \leq -\alpha_*(1-2\sigma) \|\xcl\|^2+\mathfrak{g}_1\|\xcl(t_s)\|^2+\mathfrak{g}_2,
	\end{align}
	where the constants $\mathfrak{g}_1,\mathfrak{g}_2$ are defined as
    \begin{subequations}			    \label{eq.g_i_definition}
        \begin{align}
         \mathfrak{g}_1 & = \sigma_1^{-1} \vartheta_B^2\alpha_*^{-1} \Big(2\qt_1\|\bar{C}\|^2 + 6 \qt_1\varrho_B \|\bar{C}\|^4\mathfrak{e}^2(\hb) + 6 \varrho_{AB} \|\bar{C}\|^2\mathfrak{e}^2(\hb)\Big) \\
         & \hspace{5.5cm} + 3 \sigma_2^{-1} \beta^2\alpha_*^{-1} (\varrho_B\qt_1\|\bar{C}\|^2+\varrho_{AB})^2\mathfrak{e}^2(\hb), \nonumber \\
         \mathfrak{g}_2 & =  \sigma_1^{-1}\vartheta_B^2\alpha_*^{-1}\Big(2\qt_0 + 6\qt_0\varrho_B \|\bar{C}\|^2 \mathfrak{e}^2(\hb) + 6 \|\bar{C}\|^2 \mathfrak{e}^2(\hb)\rhob^2\Big) \\
         & \hspace{5.5cm} + 3\sigma_2^{-1} \beta^2\alpha_*^{-1} \left(\varrho_B\qt_0+\rhob^2\right)\mathfrak{e}^2(\hb)+\zeta_*\rhob^2. \nonumber
        \end{align}
    \end{subequations}
	\textcolor{black}{Recalling that $V(z)\leq \|z\|^2\lambda_{max}(P_*)$ and denoting $h_s\coloneqq t_{s+1}-t_s$ and $\mathfrak{g}_3\coloneqq -\alpha_*(1-2\sigma)$, the inequality~\eqref{lyap_der_p_2} entails that}
	\begin{align}
    	\nonumber
    	V\left(t_{s+1}\right) \leq \left(e^{\mathfrak{g}_3\lambda_{\max}^{-1}(P_*)h_s}-1\right)\mathfrak{g}^{-1}_3\mathfrak{g}_2+ \left[e^{\mathfrak{g}_3\lambda_{\max}^{-1}(P_*)h_s}+\left(e^{\mathfrak{g}_3\lambda_{\max}^{-1}(P_*)h_s}-1\right)\mathfrak{g}^{-1}_3\mathfrak{g}_1\dfrac{\lambda_{\max}(P_*)}{\lambda_{\min}(P_*)}\right]V\left(t_{s}\right).
	\end{align}
	It can be shown  that the expression in brackets $[...]$ is less than $1$ if $h_s \leq \hb < h_{\max}$. 
	\textcolor{black}{Furthermore, if $\hb < h_{\max}$, then}
	\begin{align*}
    	\varlimsup_{t \rightarrow \infty} \|y(t)\|^2 & \leq \|\bar{C}\|^2\varlimsup_{t \rightarrow \infty}\|\xcl(t)\|^2  \leq \|\bar{C}\|^2\lambda^{-1}_{\min}(P_*)\varlimsup_{t \rightarrow \infty}V(t) \leq \|\bar{C}\|^2\dfrac{\mathfrak{g}_2\lambda_{\max}(P_*)}{-\mathfrak{g}_1\lambda_{\max}(P_*)-\mathfrak{g}_3\lambda_{\min}(P_*)}=\epsd^2.
	\end{align*}
	This implies that the system~\eqref{eq.sys-main} is $\epsd$-practical stable and also $y(t)$ converges to a ball with center $\ydes$ and radius $\epsd$.
\end{proof}
	
\begin{Rem}[Explicit inter-sampling bound]\label{rem:comparison}
	Theorem~\ref{thm.aperiodic} offers an AETC with a more general framework including absolute and relative thresholds whose maximal inter-sampling time~$h_{\max}$ can be found from~\eqref{eq.hmax_definition} (cf., \cite[Assumption~III.1]{heemels2013periodic}).
\end{Rem}
	
The setting in Theorem~\ref{thm.aperiodic} is clearly more stringent than the continuous measurements and actuation framework in Theorem~\ref{thm.Continuous}. Therefore, it is no longer surprising that the corresponding practical stability levels in \eqref{epsc} and \eqref{eq.eps_d.def} satisfy $\epsc \le \epsd$. The latter is essentially quantified based on three parameters: maximum inter-sampling bound~$h_{\max}$, and the absolute and relative triggering thresholds~$\qt_0$ \textcolor{black}{and} $\qt_1$ (cf. Remark~\ref{rem:special}). When $h_{\max}$ tends to $0$, our setting effectively moves from the aperiodic sampled measurement framework to the continuous domain, and when the thresholds~$\qt_0$ and $\qt_1$ tend to $0$, the event-triggered control mechanism transfers to the continuous-time implementation. {It can be shown that the gap between $\epsc$ and $\epsd$ in this case vanishes.}
\begin{Rem}[From discrete to continuous implementation]
	Let $\epsc$ be defined as in~\eqref{epsc} and $\epsd(\hb, \qt_0,\qt_1)$ in~\eqref{eq.eps_d.def} as a function of the relevant parameters $\hb, \qt_0$, \textcolor{black}{and} $\qt_1$. With a straightforward computation, one can inspect that
	\begin{align*}
	    \lim\limits_{\qt_0, \qt_1 \to 0} \lim_{\hb \to 0}\epsd(\hb, \qt_0,\qt_1) = \epsc.
	\end{align*}
\end{Rem}
{We} note that the practical stability certificate~$\epsd$ of the proposed AETC in~\eqref{eq.eps_d.def} may take $0$ values when~$\rhob = \qt_0 = 0$. This implies that even if the system is uncertain and we have an AETC in place, we may still be able to steer the output of the system to the desired target~$\ydes$. This interesting outcome, however, comes at the price of a bound on the absolute threshold~$\qt_1$. We close this section with the following result in this regard.
	
\begin{Cor}[Relative AETC threshold for perfect tracking]
	\label{cor.special}
	Suppose \textcolor{black}{that} the system~\eqref{eq.sys-main} is linear (i.e., $\rhob=0$ in Assumption~\ref{as.coeff}\ref{as.coeff.nonlinear}), the program~\eqref{eq.continuous_NMI} is feasible with~$\alpha_* > 0$, and the absolute threshold in Theorem~\ref{thm.aperiodic} is~$\qt_0=0$. {If}
	\begin{align*}
	    {\sqrt{\qt_1}}{(2\sqrt{\qt_1}+1)^2} < \dfrac{\alpha_*^2\lambda_{\min}(P_*)}{2\|\bar{L}\|^2\vartheta_B^2\lambda_{\max}(P_*)},
	\end{align*}
	then the regulation performance in~\eqref{eq.eps_d.def} is $\epsd=0$, i.e., the controller~\eqref{eq.controller1} implemented via the AETC scheme {in~Algorithm~\ref{alg.trigger}} steers the output of the system to the desired target~$\ydes$.
\end{Cor}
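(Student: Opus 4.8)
The plan is to specialize the general regulation bound $\epsd^2 = \mathfrak{f}_1(\hb,\qt_1)\qt_0 + \mathfrak{f}_2(\hb,\qt_1)\rhob^2$ from Theorem~\ref{thm.aperiodic} to the present situation and show it collapses to $0$. By hypothesis $\rhob = 0$ (linear system) and $\qt_0 = 0$, so the first term vanishes identically and the second term vanishes as long as the coefficient $\mathfrak{f}_2(\hb,\qt_1)$ is finite — equivalently, as long as the denominator appearing in~\eqref{eq.f2.def} is nonzero (and, for the bound to be a genuine \emph{upper} bound rather than a vacuous one, positive). Since $\rhob = \qt_0 = 0$, the numerator of $\mathfrak{f}_2$ reduces to $\alpha_*\zeta_*\|\bar C\|^2\sqrt{\qt_1}(1+2\sqrt{\qt_1})^{-1}$ and one also checks the numerator of $\mathfrak{f}_1$ stays bounded, so $\epsd^2 = 0 \cdot (\text{finite}) + 0 = 0$ provided the common denominator is strictly positive.

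Concretely, I would first substitute $\rhob = 0$ and $\qt_0 = 0$ into~\eqref{eq.g_i_definition}: then $\mathfrak{g}_2$ also becomes proportional only to terms that are themselves multiplied by $\qt_0$ or $\rhob^2$, hence $\mathfrak{g}_2 = 0$, while $\mathfrak{g}_1$ and $\mathfrak{g}_3 = -\alpha_*(1-\sigma_1-\sigma_2)$ retain their form with the choice $\sigma_1 = \sigma_2 = \sqrt{\qt_1}(1+2\sqrt{\qt_1})^{-1}$ already fixed in the proof of Theorem~\ref{thm.aperiodic}. Tracing the final chain of inequalities in that proof,
\[
\varlimsup_{t\to\infty}\|y(t)-\ydes\|^2 \le \|\bar C\|^2\,\frac{\mathfrak{g}_2\lambda_{\max}(P_*)}{-\mathfrak{g}_1\lambda_{\max}(P_*)-\mathfrak{g}_3\lambda_{\min}(P_*)},
\]
with $\mathfrak{g}_2 = 0$ the numerator is zero, so $\epsd = 0$ \emph{as soon as} the denominator $-\mathfrak{g}_1\lambda_{\max}(P_*)-\mathfrak{g}_3\lambda_{\min}(P_*)$ is strictly positive. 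So the real content of the corollary is to translate that positivity into the stated scalar inequality on $\qt_1$.

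The key step is therefore to unwind $-\mathfrak{g}_1\lambda_{\max}(P_*) - \mathfrak{g}_3\lambda_{\min}(P_*) > 0$ into a clean condition. With $\qt_0 = \rhob = 0$ and $\mathfrak{e}(\hb)$ kept, $\mathfrak{g}_1$ is (by~\eqref{eq.g_i_definition}) proportional to $\sigma_1^{-1}\vartheta_B^2\alpha_*^{-1}\cdot 2\qt_1\|\bar C\|^2$ plus terms carrying a factor $\mathfrak{e}^2(\hb)$; and $-\mathfrak{g}_3\lambda_{\min}(P_*) = \alpha_*(1-\sigma_1-\sigma_2)\lambda_{\min}(P_*) = \alpha_*(1+2\sqrt{\qt_1})^{-1}\lambda_{\min}(P_*)$ using $\sigma_1 = \sigma_2 = \sqrt{\qt_1}(1+2\sqrt{\qt_1})^{-1}$, so $1 - \sigma_1 - \sigma_2 = (1+2\sqrt{\qt_1})^{-1}$. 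In the limit $\hb \to 0$ (or more precisely after absorbing the $\mathfrak{e}^2(\hb)$-terms, which is exactly where $h_{\max}$ from~\eqref{eq.hmax_definition} enters — $\hb < h_{\max}$ guarantees the denominator is positive), the binding part of the inequality is the comparison of the $\qt_1$-linear term in $\mathfrak{g}_1\lambda_{\max}(P_*)$ against $\alpha_*(1+2\sqrt{\qt_1})^{-1}\lambda_{\min}(P_*)$; substituting $\sigma_1 = \sqrt{\qt_1}(1+2\sqrt{\qt_1})^{-1}$ into $\sigma_1^{-1}\cdot 2\qt_1 = 2\sqrt{\qt_1}(1+2\sqrt{\qt_1})$ and rearranging yields the announced bound $\sqrt{\qt_1}(2\sqrt{\qt_1}+1)^2 < \alpha_*^2\lambda_{\min}(P_*)\big(2\|\bar C\|^2\vartheta_B^2\lambda_{\max}(P_*)\big)^{-1}$ (reading $\bar L$ in the statement as $\bar C$). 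The routine but slightly tedious part — and the main obstacle — is bookkeeping the $\mathfrak{e}^2(\hb)$-terms in $\mathfrak{g}_1$ and checking that the hypothesis $\hb \le h_{\max}$, with $h_{\max}$ as in~\eqref{eq.hmax_definition}, is precisely what makes the full denominator (not just its dominant part) strictly positive; once that is in place, $\mathfrak{g}_2 = 0$ forces $\epsd = 0$ and the proof is complete.
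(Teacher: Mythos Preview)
Your proposal is correct and follows essentially the same route as the paper. The paper's proof is a one-liner: it observes that once $\rhob=\qt_0=0$ forces $\epsd=0$, the only thing left to guarantee is that Theorem~\ref{thm.aperiodic} actually applies, i.e., that $h_{\max}$ in~\eqref{eq.hmax_definition} is well-defined (the argument of the square root is positive), and this is exactly the stated inequality on $\qt_1$. You arrive at the same condition by requiring the denominator $-\mathfrak{g}_1\lambda_{\max}(P_*)-\mathfrak{g}_3\lambda_{\min}(P_*)$ to be positive and extracting its $\hb\to 0$ part; your algebra with $\sigma_1=\sigma_2=\sqrt{\qt_1}(1+2\sqrt{\qt_1})^{-1}$ is right and reproduces the bound (with $\bar L$ read as $\bar C$, as you noted). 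The ``main obstacle'' you flag --- bookkeeping the $\mathfrak{e}^2(\hb)$-terms --- is not really an obstacle: by construction, $h_{\max}$ is defined so that the full denominator (including those terms) vanishes at $\hb=h_{\max}$ and is strictly positive for $\hb<h_{\max}$, so there is nothing further to check once $h_{\max}$ exists.
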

	
{\begin{proof}
		The proof is an immediate consequence of Theorem~\ref{thm.aperiodic}. It only suffices to check for which values of $\qt_1$ the maximal inter-sampling~$h_{\max}$ in~\eqref{eq.hmax_definition} is still well-defined.
\end{proof}

\section{Numerical Method and Examples} 
\label{Sec:Numerical}
Since optimization problem~\eqref{eq.continuous_NMI} is non-convex, special numerical techniques are introduced in the first part of this section and then utilized in the following to validate the main results of this study.

{\subsection{Numerical Method}
There are two types of nonlinearities in the optimization problem \eqref{eq.continuous_NMI}. The first type of these nonlinearities comes from cross products of decision variables and the second one comes from the appearance of inverse of some of decision variables. Since no general-purpose scheme is available to deal with bilinear matrix inequalities, one needs to resort to approximation approaches. {The paper~\cite[Section 4]{sadabadi2016static} has well reviewed several methods that can be used to deal with the bilinearities. Methods such as ``D-K iteration", ``Path-following", ``Linearized convex-concave decomposition", ``Riccati related approach" and ``Dual iteration approach" are examples of the methods mentioned in this article. In this paper, we will examine a recent powerful technique called ``{\em sequential parametric convex approximation}" from~\cite{lee2018sequential}, that is particularly tailored to deal with bilinearity of a similar kind in Theorem~\ref{thm.Continuous}. The main advantage of the sequential parametric convex approximation method is that it offers a simultaneous method for dealing with bilinearities and appearnce of inverse of some parameters, and also, it allows during iterations to optimize simultaneously over the control gain and the Lyapunov matrices. In addition, it offers better convergence speeds than algorithms such as linearized convex-concave decomposition.} 
We first provide two preparatory lemmas. 
\begin{Lem}
	\label{lem.cauchy}
	Let $\mathcal{Y}$ and $\mathcal{Z}$ be two matrices with appropriate dimensions. The inequality~$\Sym{\mathcal{Y}^{\top}\mathcal{Z}} \preceq 0$ holds if
	\begin{align}
	\label{eq.removeproduct}
	\begin{bmatrix}
	\Sym{(\mathcal{Y}-\mathcal{Y}_k)^{\top}\mathcal{Z}_k+\mathcal{Y}_k^{\top}(\mathcal{Z}-\mathcal{Z}_k)+\mathcal{Y}_k^{\top}\mathcal{Z}_k}	& * & * \\
	(\mathcal{Y}-\mathcal{Y}_k)^{\top} & -\mathcal{U} & * \\
	(\mathcal{Z}-\mathcal{Z}_k)^{\top} & 0 & -\mathcal{U}^{-1} \\
	\end{bmatrix} \preceq 0
	\end{align}
	where $\mathcal{Y}_k$ and $\mathcal{Z}_k$ are given matrices with the same size as $\mathcal Y$ and $\mathcal Z$, respectively, and $\mathcal{U} \in \mathbb{S}_{\succ0}$ is an arbitrary matrix.
\end{Lem}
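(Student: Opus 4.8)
The statement is a textbook ``linearize, complete the square, apply Schur'' lemma, so the plan is to establish the stated \emph{sufficient} implication in three short steps. \emph{Step 1 (affine expansion):} write $\mathcal Y=\mathcal Y_k+(\mathcal Y-\mathcal Y_k)$ and $\mathcal Z=\mathcal Z_k+(\mathcal Z-\mathcal Z_k)$ and expand the product, separating the part that is affine in the increments from the single bilinear term,
\begin{align*}
\Sym{\mathcal Y^{\top}\mathcal Z}
=\Sym{(\mathcal Y-\mathcal Y_k)^{\top}\mathcal Z_k+\mathcal Y_k^{\top}(\mathcal Z-\mathcal Z_k)+\mathcal Y_k^{\top}\mathcal Z_k}
+\Sym{(\mathcal Y-\mathcal Y_k)^{\top}(\mathcal Z-\mathcal Z_k)}.
\end{align*}
The first summand is exactly the $(1,1)$ block of the displayed matrix, so the task reduces to dominating the residual term $\Sym{(\mathcal Y-\mathcal Y_k)^{\top}(\mathcal Z-\mathcal Z_k)}$ by something quadratic in the individual increments.

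\emph{Step 2 (matrix Young inequality):} for any $\mathcal U\in\mathbb{S}_{\succ0}$ of compatible size, expanding the manifestly positive semidefinite quantity
\begin{align*}
\left(\mathcal U^{1/2}(\mathcal Y-\mathcal Y_k)-\mathcal U^{-1/2}(\mathcal Z-\mathcal Z_k)\right)^{\top}\left(\mathcal U^{1/2}(\mathcal Y-\mathcal Y_k)-\mathcal U^{-1/2}(\mathcal Z-\mathcal Z_k)\right)\succeq 0
\end{align*}
yields $\Sym{(\mathcal Y-\mathcal Y_k)^{\top}(\mathcal Z-\mathcal Z_k)}\preceq(\mathcal Y-\mathcal Y_k)^{\top}\mathcal U(\mathcal Y-\mathcal Y_k)+(\mathcal Z-\mathcal Z_k)^{\top}\mathcal U^{-1}(\mathcal Z-\mathcal Z_k)$. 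Combined with Step~1, a sufficient condition for $\Sym{\mathcal Y^{\top}\mathcal Z}\preceq 0$ is therefore the ``compressed'' inequality obtained by adding these two quadratic terms to the $(1,1)$ block.

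\emph{Step 3 (Schur complement):} since $\mathcal U\succ0$, both $-\mathcal U$ and $-\mathcal U^{-1}$ are negative definite, hence invertible; applying the Schur complement to each quadratic term --- equivalently, a single Schur complement against $\BlkDiag{-\mathcal U,-\mathcal U^{-1}}$ --- recasts the compressed inequality of Step~2 as a $3\times3$ block LMI of exactly the form displayed in the statement, with the increments $\mathcal Y-\mathcal Y_k$ and $\mathcal Z-\mathcal Z_k$ filling the first block column. Thus feasibility of that LMI forces the compressed inequality and hence, via Step~2, $\Sym{\mathcal Y^{\top}\mathcal Z}\preceq 0$; only this (sufficient) direction of the Schur-complement equivalence is needed, and it holds precisely because $\mathcal U$ is positive definite.

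I do not anticipate a genuine analytical obstacle --- every step is routine. The only delicate point is the bookkeeping: deciding which increment is paired with the weight $\mathcal U$ and which with $\mathcal U^{-1}$, and tracking the transposes through the completion of squares and the Schur complement, so that the LMI produced matches the displayed one block-for-block. One must also be careful to invoke only the ``if'' half of the Schur-complement equivalence, which is all that is claimed.
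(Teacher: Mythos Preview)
Your proposal is correct and matches the paper's approach exactly: the paper does not give a detailed proof but states that the lemma ``is essentially a combination of standard Young's inequality and Schur complement,'' which is precisely your Steps~2 and~3 after the affine expansion in Step~1. Your caveat about the transpose/weight bookkeeping is apt --- the only thing to watch is that the Schur complement of the displayed block matrix produces terms of the form $(\mathcal Y-\mathcal Y_k)\mathcal U^{-1}(\mathcal Y-\mathcal Y_k)^{\top}$ rather than $(\mathcal Y-\mathcal Y_k)^{\top}\mathcal U(\mathcal Y-\mathcal Y_k)$, so you should either take the Young inequality in the form $\Sym{ab^{\top}}\preceq a\mathcal U^{-1}a^{\top}+b\mathcal U b^{\top}$ or (equivalently, since $\mathcal U$ is arbitrary positive definite) relabel $\mathcal U\leftrightarrow\mathcal U^{-1}$; this is exactly the bookkeeping you flagged and does not affect the argument.
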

Lemma~\ref{lem.cauchy} is essentially a combination of standard Young's inequality and Schur complement. It is worth noting that applying Young's inequality to the term $\Sym{\mathcal{Y}^{\top}\mathcal{Z}} \preceq 0$ yields an alternative approximation in the form of $\mathcal{Z}^{\top}\mathcal{U}^{-1}\mathcal{Z}+\mathcal{Y}^{\top}\mathcal{U}\mathcal{Y} \preceq 0$. However, if the constant matrices~$\mathcal{Y}_k$ and $\mathcal{Z}_k$ are close estimates of the variables~$\mathcal{Y}$ and $\mathcal{Z}$, respectively, then the proposed approximation in~\eqref{eq.removeproduct} is more efficient. We also note that in a context of optimization problem, the matrix~$\mathcal{U}$ is a degree of freedom, and that can be viewed as an additional decision variable. \newline
The next lemma suggests an idea to deal with the inverse of a decision variable in an optimization problem by introducing a linear over-approximation for the inverse of a matrix.
\begin{Lem} \cite[Lemma 2]{lee2016sequential}
	\label{lem.inverse}
	If $\mathcal{U},\mathcal{U}_k \in \mathbb{S}_{\succ 0}^{n}$, then
	\begin{align}
	\nonumber
	-\mathcal{U}^{-1} \preceq -2\mathcal{U}_k+\mathcal{U}_k^{-1}\mathcal{U}\mathcal{U}_k^{-1}.
	\end{align}
\end{Lem}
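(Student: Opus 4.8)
The plan is to recognise the claimed bound as the supporting-hyperplane (first-order Taylor) inequality for the operator map $X \mapsto -X^{-1}$, which is operator concave on $\mathbb{S}_{\succ 0}^{n}$, and then to verify it directly by a congruence transformation rather than invoking operator-concavity machinery. This keeps the proof self-contained and elementary, which is the style of the surrounding preparatory lemmas.

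Concretely, I would first rewrite the assertion as a single positive-semidefiniteness requirement,
\[
	\mathcal{U}^{-1} - 2\mathcal{U}_k^{-1} + \mathcal{U}_k^{-1}\mathcal{U}\mathcal{U}_k^{-1} \succeq 0 ,
\]
i.e. the form in which the bound is actually used to linearise the block $-\mathcal{U}^{-1}$ (the correction term is affine in the decision variable $\mathcal{U}$, with $\mathcal{U}_k$ a fixed linearisation point). The key step is the elementary fact that, since $\mathcal{U} \succ 0$, one has $R^{\top}\mathcal{U}^{-1}R \succeq 0$ for every matrix $R$ of compatible size. Choosing $R \Let \mathcal{U}\mathcal{U}_k^{-1} - I$ and using that $R^{\top} = \mathcal{U}_k^{-1}\mathcal{U} - I$ because $\mathcal{U},\mathcal{U}_k$ are symmetric, the expansion of $R^{\top}\mathcal{U}^{-1}R$ has its two cross terms collapse via $\mathcal{U}^{-1}\mathcal{U} = \mathcal{U}\mathcal{U}^{-1} = I$, and what remains is exactly the displayed expression. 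This establishes the inequality with no further work.

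I would then record the two properties of the bound that the sequential scheme of Section~\ref{Sec:Numerical} exploits: equality holds at $\mathcal{U} = \mathcal{U}_k$, so the over-approximation is tight at the current iterate and no feasibility is lost when the linearisation point is refreshed; and the right-hand side is affine in $\mathcal{U}$, so substituting it for $-\mathcal{U}^{-1}$ turns a convex constraint into an LMI. In particular, combined with Lemma~\ref{lem.cauchy}, whose output contains precisely a $-\mathcal{U}^{-1}$ block, this lemma is what converts one ``bilinear-plus-inverse'' constraint of the program~\eqref{eq.continuous_NMI} into a genuine linear matrix inequality at each iteration.

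The main (and essentially only) obstacle here is bookkeeping rather than analysis: one must keep $\mathcal{U}_k^{-1}$ in the linear correction term and verify that the transpose of the congruence factor is computed using symmetry of $\mathcal{U}$ and $\mathcal{U}_k$. The sign and the choice of inverse in $R = \mathcal{U}\mathcal{U}_k^{-1} - I$ is the single place where the short computation can go astray; everything else is a mechanical expansion using $\mathcal{U}^{-1}\mathcal{U} = I$.
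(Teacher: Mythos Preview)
The paper does not actually prove this lemma; it merely cites \cite[Lemma~2]{lee2016sequential} and moves on. Your self-contained argument via the congruence $R^{\top}\mathcal{U}^{-1}R\succeq 0$ with $R=\mathcal{U}\mathcal{U}_k^{-1}-I$ is correct and is in fact the standard one-line proof of this tangent-line bound for the operator-concave map $X\mapsto -X^{-1}$.

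One point worth flagging: in your rewrite you replaced the paper's $-2\mathcal{U}_k$ by $-2\mathcal{U}_k^{-1}$. That silent correction is justified---the inequality as printed in the paper fails already at $\mathcal{U}=\mathcal{U}_k$ unless $\mathcal{U}_k\preceq\mathcal{U}_k^{-1}$, whereas your version gives equality there, as it must for a first-order over-approximation. So your proof is of the intended (and true) statement, and the paper's display contains a typo; you may want to say this explicitly rather than absorb it into the ``rewrite'' step.
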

By some straightforward computations and using the results of Lemmas~\ref{lem.cauchy} and~\ref{lem.inverse}, one can observe that
\begin{align}
    \nonumber
    \begin{bmatrix}
    M_k+\sum_{i,j}\kappa_{ij}J^{\top}e_j^{\top}e_jJ&\begin{matrix}*&*&*\end{matrix} \\
    \begin{matrix} H_{1}^{\top} \\H_{1}^{\top}\\ X_k^{\top} \end{matrix} & G_k
    \end{bmatrix} \preceq 0
    \Rightarrow\begin{bmatrix}
    M+\sum_{i,j}\kappa_{ij}J^{\top}e_j^{\top}e_jJ&*&*&*&* \\
    H_1^{\top} &G_1&*&*&*\\
    H_1^{\top}&0&G_2&*&*\\
    H_2^{\top}&0&0&G_3&*\\
    JP&0&0&0&-\zeta I
    \end{bmatrix} \preceq 0
\end{align}
where,
\begin{align}
    \nonumber
    \begin{matrix*}[l]
    M_k\coloneqq [P\bar{A}+P_k\bar{B}(F-F_k)\bar{C}+P\bar{B}F_k\bar{C}]^\dagger+\alpha I\qquad
    G_{3_k}\coloneqq \BlkDiag{(-2\mu_{ij_k}+\mu_{ij})}_{i,j} 
    \\G_k\coloneqq \BlkDiag{G_1,G_{2},G_{3_k},-2U_k+U,-U,-\zeta I}
    \qquad H_{2_k}\coloneqq \BlkDiag{\mu_{ij_k}}H_2
    \\X_k\coloneqq\begin{bmatrix}H_{2_k}&(P-P_k)U_k^{\top}& \bar{B}(F-F_k)\bar{C} &PJ^\top \end{bmatrix}
    \end{matrix*}
\end{align}
Building on the above definitions, Algorithm \ref{alg:sequ}, as a sequential approximate algorithm, can be proposed to find a stationary point for the optimization problem \eqref{eq.continuous_NMI} (From \cite[Proposition 3]{lee2018sequential}, it can be proved that Algorithm \ref{alg:sequ} converges to a stationary point of \eqref{eq.continuous_NMI}).}
{{\begin{algorithm}
	\caption{Sequential Parametric Convex Approximation}
	\label{alg:sequ}
	\begin{algorithmic}[1]
		\\ \textbf{Set }$k=0,F_k=0,\mu_{ij_k}=1$.
		\\ \textbf{Solve } $\left\{\begin{matrix*}[l](\alpha_k,\zeta_k)=\arg\!\max \text{\space}(\ln{\alpha}-\ln{\zeta})
		\\\ensuremath{\,\textrm{Subject to}}
		\\\hspace{3mm}\lambda_{ij} \in \R_{>0}, W_{ij} \in \R^{(\nx+\enu) \times (\nx+\enu)} \\\hspace{3mm}\begin{bmatrix}W_{ij}-\lambda_{ij}a_{b_{ij}}^{2}J^{\top}e_i^{\top}e_iJ&*\\e_jJ&\lambda_{ij}\end{bmatrix}\succeq0
		\\\hspace{3mm}0\succeq\alpha I+\bar{A}+\bar{A}^{\top}+\sum_{i,j}W_{ij}+\zeta J^\top J\end{matrix*}\right.$.
		\\ \textbf{Solve } $\left\{\begin{matrix*}[l] P_k \in \mathbb{S}^{\nx+\enu}_{\succ 0} \\ P_k\bar{A}+\bar{A}^{\top}P_k+{\zeta_k^{-1}}PJ^\top JP\preceq -\alpha_kI\end{matrix*}\right.$.
		\\ \textbf{Set }$k=1$.
		\While {$\left|\alpha_k\zeta_k^{-1}-\alpha_{k-1}\zeta_{k-1}^{-1}\right|>\eps$}
		\\ \textbf{Solve} $\left\{\begin{matrix*}[l] (P_k,F_k,\mu_{{ij}_k},\alpha_k,\zeta_k)=\arg\!\max \text{\space}(\ln{\alpha}-\ln{\zeta})
		\\\ensuremath{\,\textrm{s.t.}}
		\hspace{3mm}{\kappa_{ij},\mu_{ij} \in \R_{>0}, \text{\space}P,U \in \mathbb{S}^{\nx+\enu}_{\succ 0},}
		\\ \hspace{7mm}{\Cc \in \R^{\enu \times \enu},\text{\space} \Bc,\Dc \in \R^{\enu \times \ny}}
		\\\hspace{7mm} {M_k\coloneqq [P\bar{A}+P_k\bar{B}(F-F_k)\bar{C}+P\bar{B}F_k\bar{C}]^\dagger+\alpha I}
		\\\hspace{7mm}G_{3_k}\coloneqq \BlkDiag{(-2\mu_{ij_k}+\mu_{ij})}_{i,j}
		\\\hspace{7mm}G_k\coloneqq \BlkDiag{G_1,G_{2},G_{3_k},-2U_k+U,-U,-\zeta I}
		\\\hspace{7mm}H_{2_k}\coloneqq \BlkDiag{\mu_{ij_k}}H_2
		\\\hspace{7mm}{X_k\coloneqq\begin{bmatrix}H_{2_k}&(P-P_k)U_k^{\top}& \bar{B}(F-F_k)\bar{C} &PJ^\top \end{bmatrix}}
		\\\hspace{7mm}\begin{bmatrix}
		M_k+\sum_{i,j}\kappa_{ij}J^{\top}e_j^{\top}e_jJ&\begin{matrix}*&*&*\end{matrix} \\
		\begin{matrix} H_{1}^{\top} \\H_{1}^{\top}\\ X_k^{\top} \end{matrix} & G_k
		\end{bmatrix} \preceq 0\end{matrix*}\right.$
		\\ \textbf{Set} $k+1 \leftarrow k$.
		\EndWhile
	\end{algorithmic}
\end{algorithm}}
{\subsection{Examples}}
\label{subsection:simulation}
{In this section,we illustrate the main results of Theorems~\ref{thm.Continuous} and~\ref{thm.aperiodic}.}

\begin{Ex}[Synthetic setting]
Consider system~\eqref{eq.sys-main} with the nominal matrices\footnote{These nominal matrices are chosen from \emph{Compl$_e$ib} library of MATLAB (\url{http://www.complib.de/}).}
\begin{align}
    \nonumber
    \begin{gathered}
        A=\begin{bmatrix}
        1.40&-0.21&6.71&-5.68\\
        -0.58&-4.29&0&0.67\\
        1.07&4.27&-6.65&5.89\\
        0.05&4.27&1.34&-2.10
        \end{bmatrix},\,
        B=\begin{bmatrix}
        0&0\\5.68&0\\1.14&-3.15\\1.14&0
        \end{bmatrix},\,
        C=\begin{bmatrix} 1&0\\0&1\\1&0\\-1&0\end{bmatrix}^\top.
    \end{gathered}
\end{align}
The uncertainty bounds are $\Mbdd=0.1(\mathds{1}_4^\top\otimes \mathds{1}_4)$ and $\Nbdd=0.1(\mathds{1}_2^\top\otimes \mathds{1}_4)$. Matrices $\Bc$, $\Cc$, and $\Dc$ are found from~\eqref{eq.continuous_NMI} by means of the aforementioned technique. In this example, we consider the desired output value as $\ydes = \begin{bmatrix} 9 & 10 \end{bmatrix}$. We first examine the result of Theorem~\ref{thm.Continuous}. For this purpose, we consider a nonlinear term in the form $k^\star(x)=\rhob/2\begin{bmatrix} sin(x_1(t))&\dots&sin(x_4(t))\end{bmatrix}$ in the dynamic~\eqref{eq.sys-main} and inspect the influence of amplitude~$\rhob$ on the desired regulation performance. Figure~\ref{fig.kb} compares the actual regulation error (i.e., deviation between the output and its desired value) in solid black line, and the predicted error by~\eqref{epsc} in dashed red line. 
				
{Next, we introduce a simulation setting to validate the theoretical bound~\eqref{eq.hmax_definition} in Theorem~\ref{thm.aperiodic}. While \eqref{eq.hmax_definition} anticipates that $\hb \leq 0.0286$ ensures the stability of the system under AETC, the numerical investigation shows that in this example the stability is guaranteed for higher values up to $\hb \leq 0.105$. It is, however, worth mentioning that the regulation error is not much influenced by $\hb$ as long as $\hb \leq 0.105$. This observation is also qualitatively aligned with the assertion of Theorem~\ref{thm.aperiodic} (cf.~\eqref{eq.eps_d.def} and its dependency on $\hb$ as defined in~\eqref{eq.f.def}).}

With regards to the triggering mechanism and its impact on the regulation error in Theorem~\ref{thm.aperiodic}, we vary the threshold level in the inequality~\eqref{eq.etc-structure} in the form {$\qt_0=\qt_1=\xi$}. The solid black line in Figure~\ref{fig.threshold} shows the impact of this variation of the pair $(\qt_0,\qt_1)$ through the variable~$\xi$ on the actual the regulation error. As anticipated by Theorem~\ref{thm.aperiodic}, the degradation of the regulation performance is dominated by the theoretical bound~\eqref{eq.eps_d.def} (red dashed line). Besides these error bounds, we also inspect the relation between the relative frequency of triggered events (in proportion to the total number of sampling instants) and the threshold level. This observation is depicted in blue dotted curve with the axis on the right-hand side of Figure~\ref{fig.threshold}. As expected, the increase of the threshold monotonically reduces the frequency of the triggering events.
\begin{figure}[!t]
	\centering
    \minipage{0.5\textwidth}
	\centering
    \includegraphics[width=1\textwidth]{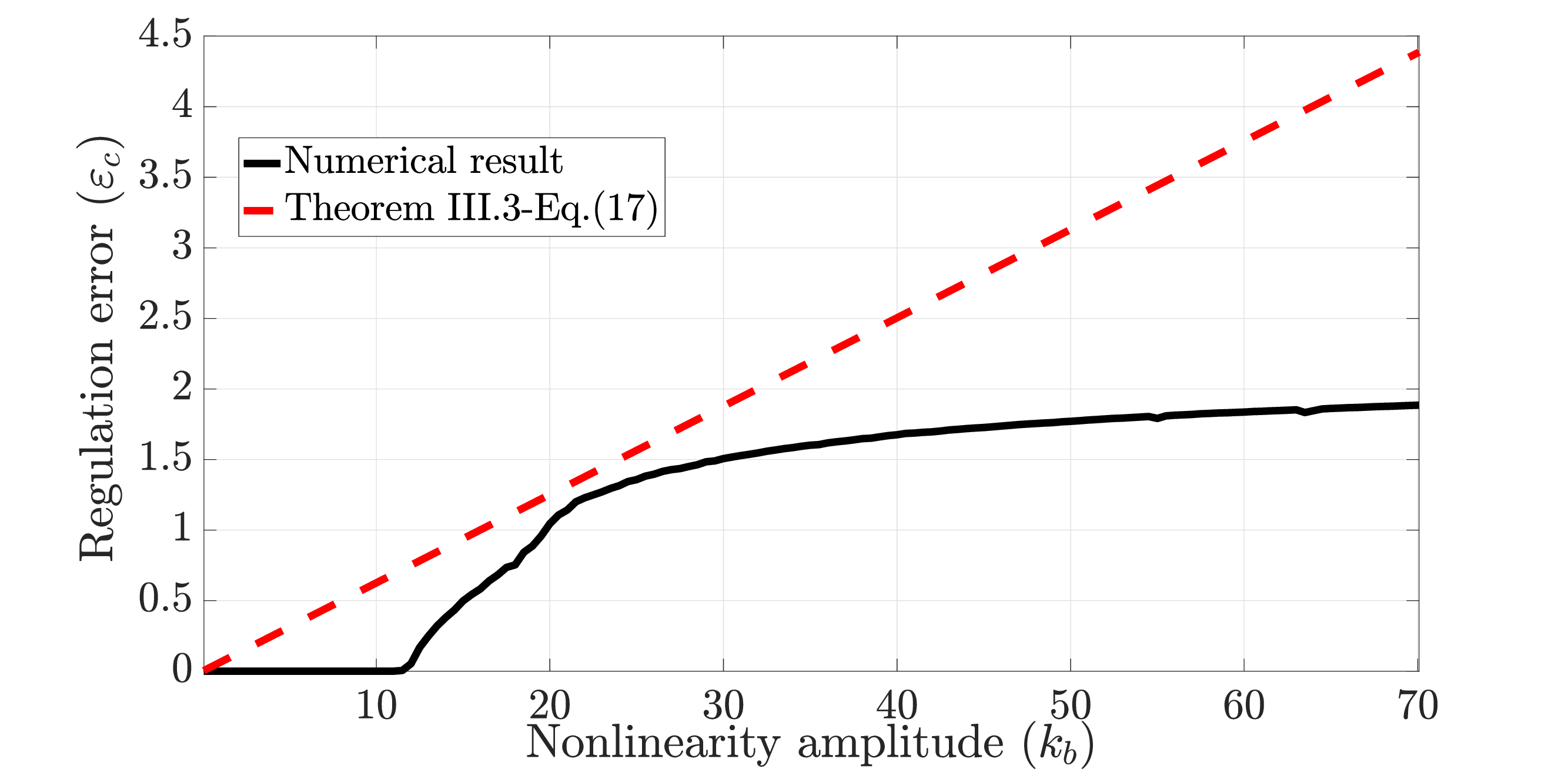}
    \caption{{\small Impact of the nonlinearity amplitude on theoretical bound~\eqref{epsc} in Theorem~\ref{thm.Continuous} and the actual numerical error.}}
	\label{fig.kb}
	\endminipage\hfill
    \minipage{0.5\textwidth}
    \centering
    \includegraphics[width=1\textwidth]{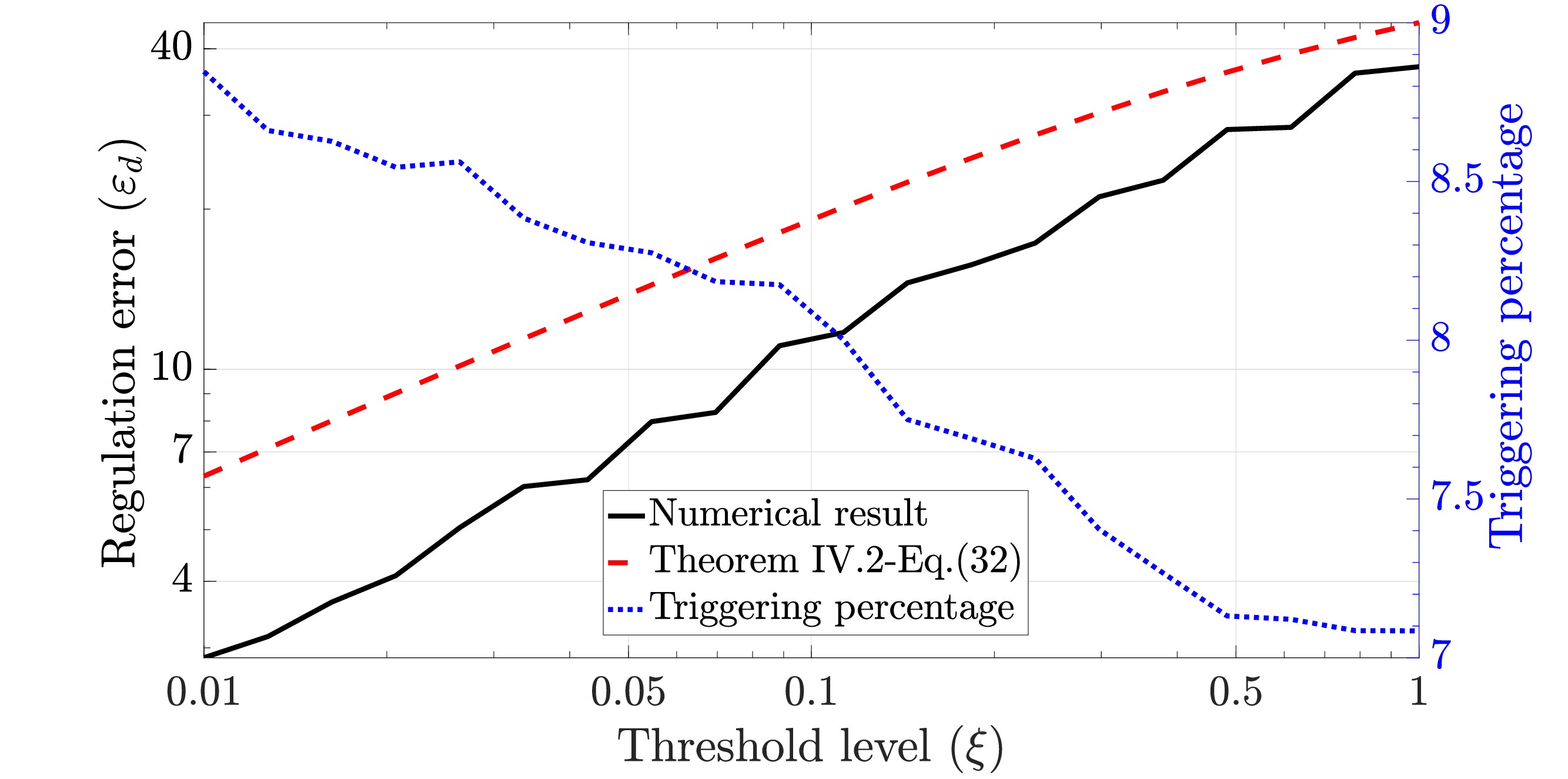}
    \caption{{\small Impact of threshold~\eqref{eq.etc-structure} on theoretical bound~\eqref{eq.eps_d.def} in Theorem~\ref{thm.aperiodic}, the actual numerical error, and the triggered events rate.}}
	\label{fig.threshold}
	\endminipage\hfill
\end{figure}
\end{Ex}
\section{{Conclusion}}
\label{sec:conclusion}
{In this article, we introduced an optimization-based framework to synthesize robust dynamic controllers in order to ensure the output regularization task for systems with uncertain and potentially nonlinear dynamics. To numerically solve such an optimization problem, a sequential parametric convex approximate algorithm was proposed. We further introduced a general sampling-based event-triggered technique that paves the way to implement the proposed controller in case of sampled measurements and discontinuous actuation updates. It is remarkable that the procedure of the triggering law is decoupled from control synthesis and the key parameters such as maximal inter-sampling time is explicitly computationally available. }

\bibliographystyle{plain}
\bibliography{ref1}

\end{document}